\numberwithin{equation}{section} 
\newtheorem{thm}{Theorem}[section]
\newtheorem{coro}[thm]{Corollary}
\newtheorem{lem}[thm]{Lemma}
\newtheorem{proposition}[thm]{Proposition}
\theoremstyle{definition}
\newtheorem{rem}{Remark}[section]
\def\<{\langle}             \def\>{\rangle}
\newcommand{\beeq}{\begin{equation}}\newcommand{\eneq}{\end{equation}}
\newcommand{\al}{\alpha}    \newcommand{\be}{\beta}
\newcommand{\de}{\delta}    \newcommand{\De}{\Delta}
\newcommand{\ep}{\varepsilon}
    \newcommand{\la}{\lambda}
\newcommand{\om}{\omega}    
\newcommand{\R}{\mathbb{R}}
\newcommand{\Sp}{\mathbb{S}}
\def \endprf{\hfill  {\vrule height6pt width6pt depth0pt}\medskip}
\newcommand{\pt}{\partial_t}\newcommand{\pa}{\partial}
\newcommand{\les}{{\lesssim}}\newcommand{\ges}{{\gtrsim}}
\newcommand{\supp}{\,\mathop{\!\mathrm{supp}}}
\newcommand{\gm}{\mathfrak{g}}
\numberwithin{equation}{section}
\title[Semilinear wave equations on asymptotically Euclidean manifolds]
      {The blow up of solutions to semilinear wave equations 
      on asymptotically Euclidean manifolds}
\author{Mengyun Liu}
\address{Department of Mathematics\\                	
Zhejiang Sci-Tech University\\                Hangzhou 310018, P. R. China}
\email{mengyunliu@zstu.edu.cn}
\author{Chengbo Wang}\address{School of Mathematical Sciences\\                Zhejiang University\\    Hangzhou 310027, P. R. China}\email{wangcbo@zju.edu.cn}
\urladdr{http://www.math.zju.edu.cn/wang}
\thanks{
The authors were supported in part by
 NSFC 11971428. 
 }
\dedicatory{} \commby{}
\begin{document}
\begin{abstract}
In this paper, we investigate the problem of blow up and sharp upper bound  estimates of the lifespan for the solutions to the semilinear wave equations, posed on asymptotically Euclidean manifolds.
Here the metric is assumed to be exponential perturbation of the 
 spherical symmetric, long range asymptotically Euclidean metric.
One of the main ingredients in our proof is
the construction of (unbounded) positive entire solutions for
 $\Delta_{\gm}\phi_\la=\lambda^{2}\phi_\la$, 
with certain estimates which are uniform for small parameter $\lambda\in (0,\la_0)$.
In addition, 
our argument works equally well for semilinear damped wave equations,
when the coefficient of the dissipation term is integrable (without sign condition) and space-independent.
\end{abstract}

\keywords{blow up,  Strauss exponent, lifespan, asymptotically Euclidean manifolds, 
positive entire solutions
}

\subjclass[2010]{35L05, 35L71, 35B44, 35B33, 35B40, 35B30}

\maketitle
\tableofcontents

\section{Introduction}
In this paper, we investigate the problem of blow up and sharp upper bound  estimates of the lifespan for the solutions to the semilinear wave equations, posed on asymptotically Euclidean manifolds.
In addition, 
our argument works equally well for semilinear damped wave equations,
when the coefficient of the dissipation term is integrable (without sign condition) and space-independent.

Let $(\R^{n}, \gm)$ be a asymptotically Euclidean (Riemannian) manifold, with $n\ge 2$. 
By asymptotically Euclidean, we mean that
$(\R^n, \gm)$ is certain perturbation of the Euclidean space $(\R^n, \gm_0)$. More precisely, we assume
$\gm$ can be decomposed as 
\begin{align}
\label{dl1}
\gm=\gm_{1}+\gm_{2}\ ,
\end{align}
where $\gm_{1}$ is a spherical symmetric, long range perturbation of $\gm_0$, and $\gm_{2}$ is an exponential (short range) perturbation.
By definition, there exists polar coordinates $(r,\omega)$ for $(\R^n,\gm_1)$, in which we can write
\beeq
\label{dl3}
\gm_{1}=K^{2}(r)dr^{2}+r^{2}d\omega^{2}\ , 
\eneq
where $d\om^2$ is the standard metric on the unit sphere $\Sp^{n-1}$, and
\beeq
\label{dl2}
|\pa^{m}_{r}(K-1)|\les  \<r\>^{-m-\rho}
,  m=0, 1, 2.
\eneq
for some given constant $\rho >0$
\footnote{It turns out that, for the purpose of blow up results, we only need the following conditions for $K$, viewed as $C^2$ function of $r\ge 0$,
$$
\lim_{r\to \infty}K(r)=1,\ 
\limsup_{r\to \infty} rK'/K<n-1,\ K'\in L^{1}\cap L^{2}, \ K''\in L^{1}, \  
 rK'\in L^{\infty},\ r^{2} K''\in L^{\infty}\ .
$$
}.
Here and in what follows, 
$\langle x\rangle=\sqrt{1+|x|^2}$, and
we use $A\les B$ ($A\gtrsim B$) to stand for $A\leq CB$ ($A\geq CB$) where the constant $C$ may change from line to line. 
Equipped with the coordinates $x=r\omega$, we have
$$\gm=g_{jk}(x)dx^j dx^k\equiv \sum^{n}_{j,k=1}g_{jk}(x)dx^j dx^k\ ,\ \gm_2=g_{2, jk}(x)dx^j dx^k\ ,$$
where we have used the convention that Latin indices $j$, $k$ range from $1$ to $n$ and the Einstein summation convention for repeated upper and lower indices. 
Concerning $\gm_2$, we assume
it is an exponential (short range) perturbation of $\gm_1$, that is, there exists $\al>0$ so that
\beeq
\label{dlfjia}
|\nabla g_{2,jk}|+|g_{2,jk}|\les e^{-\al\int^{r}_{0}K(\tau)d\tau},\ 
|\nabla^2 g_{2,jk}|\les 1\ .
\eneq
By asymptotically Euclidean and Riemannian assumption, it is clear that 
there exists a constant $\delta_{0}\in(0, 1)$ such that 
\beeq
\label{unelp}
\delta_{0}|\xi|^2\le
g^{jk}\xi_{j}\xi_{k}\le
\delta_{0}^{-1} |\xi|^2, \forall \ \xi\in\R^{n},\ 
K\in (\delta_{0}, 1/\delta_{0})
\ .
\eneq

In this paper, we are interested in the  blow up of solutions for the following semilinear  wave equations with small data, posed on aysmtotically Euclidean manifolds \eqref{dl1}-\eqref{unelp},
\beeq
\label{2.1}
\begin{cases}
\pa^{2}_{t}u-\Delta_{\gm}u=|u|^{p}\\
u(0,x)=\ep u_0(x), u_{t}(0,x)=\ep u_1(x)\\
\end{cases}
\eneq
Here,
$\Delta_{\gm} =
g^{-1/2}\pa_j g^{1/2}\gm^{jk} \pa_k$
 is the standard Laplace-Beltrami operator, with
$g=\det(\gm_{jk})$ and $(\gm^{ij})$ being the inverse of $(\gm_{ij})$.
Concerning the initial data, we assume 
 $(u_0, u_1)$ are nontrivial with
\beeq
\label{hs2}
(u_0, u_1)\in C^{\infty}_{0}(\R^{n}),\ u_0, u_1\ge 0,\  \supp(u_0, u_1)\subset\{x\in \R^{n}: r\le R_{0}\}\ , 
\eneq
 for some $R_{0}>0$.

When $\gm=\gm_{0}$,
the problem was initiated by the work of John \cite{John79} for $n=3$ 
where the critical power, for the problem to admit global solutions for any small data,
was determined to be $p_c(3)=1+\sqrt{2}$.
In general, it is known as the Strauss conjecture \cite{Strauss81}, that the critical exponent $p_{c}(n)$ is the positive root of equation:
$$(n-1)p^{2}-(n+1)p-2=0
(\Leftrightarrow
(n-1)p(p-1)=2(p+1)
)
\ .$$
This conjecture has been essentially proved,
see Georgiev-Lindblad-Sogge
\cite{GLS97},
Sideris \cite{Sideris84} and references therein.
The critical case is also known to blow up in general,
see Schaeffer \cite{Scha85} ($n=2, 3$), Yordanov-Zhang \cite{YorZh06} and Zhou \cite{Zh07}
($n\geq 4$).
   In addition, when there is no global solution, it is also important to obtain the estimates of life span for the solutions, in terms of $\ep$. When $\max(1, 2/(n-1))<p<p_{c}(n)$, $n\geq 2$, it have been proved that
\beeq
c\ep^{\frac{2p(p-1)}{(n-1)p^{2} - (n+1)p - 2}}\leq T_{\ep}\leq C\ep^{\frac{2p(p-1)}{(n-1)p^{2} - (n+1)p - 2}},\eneq
for nontrivial data satisfying \eqref{hs2},
where $T_\ep$ denotes the lifespan and $\ep$ is the size of the initial data, $c$, $C$ are some positive constants, 
see 
Lai-Zhou \cite{LaiZhou14},
 Takamura 
\cite{Taka15} and references therein for history. In addition, when $u_1\neq 0$, $n=2$ and $1<p\leq 2$, we have another estimate:
$$T_{\ep}\sim \left\{
\begin{array}{ll }
\ep^{\frac{p-1}{p-3}} ,     &   1<p<2, \\
\ep^{-1}   (\ln(\ep^{-1}))^{-1/2} ,  &   p=2,
\end{array}\right.$$
see Lindblad \cite{L1990} for $p=2$ and Takamura \cite{Taka15}, Imai-Kato-Takamura-Wakasa \cite{IKTW} for $1<p<2$. In the critical case, $p=p_{c}(n)$, it was conjectured that the lifespan is 
\beeq
\label{724}
\exp(c\ep^{-p(p-1)})\leq T_\ep\le \exp(C\ep^{-p(p-1)})\ .
\eneq
 The lower dimensional cases $n=2, 3$ were proved by Zhou
 \cite{MR1177534,MR1233659}. Takamura-Wakasa \cite{TakaWakasa11} obtained the upper bound when $n\geq 4$, see also Zhou-Han \cite{ZhouHan14criti}, while the lower bound was obtained by Lindblad-Sogge \cite{LdSo96} when $n\leq 8$ or $n\geq 2$ for spherically symmetric initial data. 
 See, e.g., Wang \cite{Wang18} for a complete history. 
  
  In the past 10 years or so, there have been many works concerning the analogs of the problem for general manifolds, including non-trapping asymptotically Euclidean manifolds, black hole space-time and exterior domain.
The problem on exterior domain has been relatively well-understood, where existence with $p>p_c(n)$ for spatial dimension up to four and blow up with $p\le p_c(n)$ have been obtained, see Smith-Sogge-Wang \cite{SSW12},  Lai-Zhou \cite{LaiZhou18},  Sobajima-Wakasa \cite{MoWa18}, and references therein.

The existence theory for asymptotically flat space-times has been well-developed for spatial dimension three and four,
see Sogge-Wang \cite{SW10}, Wang-Yu \cite{WaYu11},
Lindblad-Metcalfe-Sogge-Tohaneanu-Wang \cite{LMSTW},
Metcalfe-Wang \cite{MW17} and Wang \cite{W17}, with help of the local energy estimates and weighted Strichartz estimates. Notice here that, comparing \eqref{dlfjia},
the assumption on $\gm_2$, for non-trapping asymptotically Euclidean manifolds,
 is that
\beeq
\label{eq-g2-ae2}
\nabla^\be g_{2,jk}=\mathcal{O}(\<r\>^{-\rho-1-|\be|}), |\be|\le 3\ .
\eneq

In contrast, much less is known for the blow up theory.
Comparing with the existence theory, it is very natural to expect blow up phenomena for $p\le p_c(n)$, for asymptotically Euclidean manifolds $(\R^n, \gm)$ with $\gm_1$ satisfying 
\eqref{dl3}-\eqref{dl2} and $\gm_2$ 
 satisfying 
\eqref{eq-g2-ae2}, as well as the 
Schwarzschild/Kerr black hole spacetimes. 
For asymptotically Euclidean manifolds
with $\gm_{1}=\gm_0$,
and $\gm_2$ 
 satisfying the stronger exponential assumption
\eqref{dlfjia},
 Wakasa-Yordanov
 \cite{WaYo18-1pub} proved blow up results and obtained the expected sharp upper bound of the lifespan \eqref{724} in the critical case $p=p_c(n)$, which agrees with the lower bound for 
 $n=4$ obtained by the second author
 \cite{W17}.
 For the Schwarzschild black hole spacetime, Lin-Lai-Ming \cite{LinLaiMing19} obtained blow up result for $1<p\le 2$, while Catania-Georgiev \cite{CaGe06} obtained a weaker blow up result for $1<p< p_c(3)$. 

Our first main theorem addresses the blow up problem for asymptotically Euclidean manifolds $(\R^n, \gm)$ with $\gm_1$ satisfying 
\eqref{dl3}-\eqref{dl2} and $\gm_2$ 
 satisfying the stronger
\eqref{dlfjia}.

\begin{thm}
\label{thm-1-AE}
Let $n\geq 2$ and $1<p\le p_{c}(n)$. Consider \eqref{2.1} posed on asymptotically Euclidean manifolds \eqref{dl1}-\eqref{dlfjia}.
 Assuming that
the data are nontrivial satisfying \eqref{hs2},
  then there is $\ep_0>0$ so that for any $\ep\in (0, \ep_0)$, 
  there exist $T\ge 1$ and a unique weak solution $u\in C \dot H^{1/2}_{\rm comp}\cap L^{\frac{2(n+1)}{n-1}}_{t,x}([0,T]\times \R^n)$. Moreover, let $T_\ep$ be the lifespan of the local solution, i.e., $T_\ep:=\sup {T}$, then we have
  $T_\ep<\infty$ and more precisely, there exists a positive constant $C_{0}$ depending only on $n, p, R_{0},  u_0, u_1$, such that 
  \beeq\label{eq-life}
T_\ep\le \left\{
\begin{array}{ll }
  C_0 \ep^{\frac{2p(p-1)}{(n-1)p^{2} - (n+1)p - 2}}    & 1<p<p_c(n);   \\
\exp(C_0\ep^{-p(p-1)})      &   p=p_c(n).
\end{array}\right.
\eneq
In addition, when $n=2$, $1<p<2$ and
$u_1$ does not vanish identically,  the upper bound of the lifespan can be improved to 
\beeq
\label{eq-life-special}
T_\ep\le C_0 \ep^{-\frac{p-1}{3-p}}\ .
 \eneq
Here $\dot{H}^{s}$ with $|s|<n/2$ denotes the standard homogeneous Sobolev space, while $\dot{H}^{s}_{\rm comp}$ denotes $\dot H^s$ with compact support.
\end{thm}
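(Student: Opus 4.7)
I would follow the test-function method pioneered by Sideris and Yordanov--Zhang and adapted to asymptotically Euclidean settings by Wakasa--Yordanov \cite{WaYo18-1pub}. The central ingredient, announced in the abstract, is a family of positive entire radial solutions $\phi_\la$ to $\Delta_\gm\phi_\la=\la^2\phi_\la$ for $\la\in(0,\la_0]$ that mimics the Euclidean modified Bessel eigenfunctions, with the asymptotic
\[
\phi_\la(x)\asymp \la^{-(n-1)/2}\langle r\rangle^{-(n-1)/2}\exp\Bigl(\la\int_0^r K(\tau)\,d\tau\Bigr)
\]
holding uniformly as $\la\to 0^+$. I would construct $\phi_\la$ in two stages. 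For the radial long-range background $\gm_1$ the eigenvalue problem reduces, via the Liouville change of variable $s=\int_0^r K(\tau)\,d\tau$, to a perturbed modified Bessel equation; matching the entire positive solution $I_{(n-2)/2}(\la s)$ at large $s$ and absorbing the long-range error using \eqref{dl2} produces the radial $\phi_\la$. The short-range perturbation $\gm_2$ is then handled by a Duhamel/Neumann-series argument in a weighted exponential space, the exponential smallness in \eqref{dlfjia} guaranteeing convergence and preservation of positivity.

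With $\phi_\la$ in hand I set
\[
F_\la(t):=\int_{\R^n}u(t,x)\phi_\la(x)\,dV_\gm,\qquad G(t):=\int_{\R^n}u(t,x)\,dV_\gm,
\]
where $dV_\gm=\sqrt{g}\,dx$. Finite propagation speed, which by \eqref{unelp} confines $\supp u(t,\cdot)$ to $\{r\le R_0+\delta_0^{-1}t\}$, together with self-adjointness of $\Delta_\gm$ with respect to $dV_\gm$, gives $F_\la''-\la^2 F_\la=\int|u|^p\phi_\la\,dV_\gm\ge 0$ and $G''=\int|u|^p\,dV_\gm\ge 0$. Multiplying $F_\la$ by $e^{\pm\la t}$ turns the linear side into a monotone first-order quantity, and iterating the resulting Duhamel representation against the nonnegative initial data yields the integral lower bound $G(t)\ges\ep$ (sharpened to $\ep t$ when $n=2$ and $u_1\not\equiv 0$) and, after tuning $\la\sim 1/(R_0+t)$, the pointwise lower bound $u(t,x)\ges\ep(R_0+t)^{-(n-1)/2}$ on the support. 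Combined with H\"older's inequality and the volume growth of the light cone, these give the Kato--Sideris nonlinear ODE inequality for $G$ whose blow-up time is exactly $\ep^{2p(p-1)/[(n-1)p^2-(n+1)p-2]}$, producing \eqref{eq-life} for $1<p<p_c(n)$ and, via the improved linear lower bound, \eqref{eq-life-special}.

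For the critical exponent $p=p_c(n)$ the same ODE becomes scale-balanced and Kato's lemma falls short, so I would invoke the slicing/iteration scheme of Wakasa--Yordanov: starting from the subcritical-style bound and feeding it back into the integral inequality on a dyadic sequence of time slices, each iteration gains a logarithmic factor, and summing the geometric series yields $T_\ep\le\exp(C_0\ep^{-p(p-1)})$. The main obstacle will be the construction of $\phi_\la$ in the first step. The long-range factor $K$ deforms the phase of the Bessel-type solution from $\la r$ to $\la\int_0^r K$, and uniform-in-$\la$ control as $\la\to 0^+$ is delicate because the Bessel asymptotic transitions through the regime $\la r\sim 1$ where neither the near-origin power-series expansion nor the large-argument WKB expansion is valid by itself. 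In addition, the exponential perturbation $\gm_2$ breaks spherical symmetry, so positivity and the sharp asymptotics of $\phi_\la$ must be re-established via the weighted perturbation argument; once these properties are secured, the remaining analysis is a direct transcription of the Euclidean blow-up machinery to the weighted measure $dV_\gm$.
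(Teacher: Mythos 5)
Your overall route is the same as the paper's: construct a family of positive entire solutions $\phi_\lambda$ of $\Delta_{\gm}\phi_\lambda=\lambda^2\phi_\lambda$ with estimates uniform in $\lambda\in(0,\lambda_0]$, run the Yordanov--Zhang test-function functionals, close the subcritical case with Kato's lemma (and the $F(t)\gtrsim\epsilon t$ improvement when $n=2$, $u_1\not\equiv 0$), and handle $p=p_c(n)$ by the Agemi--Kurokawa--Takamura slicing iteration as in Wakasa--Yordanov. You differ only in how $\phi_\lambda$ is built: you propose Bessel matching after a Liouville change of variable plus a Neumann series for $\gm_2$, while the paper solves a Dirichlet problem on $B_{1/\lambda}$ and uses the maximum principle, Hopf's lemma and a scaling/compactness argument for a uniform positive lower bound in $\{\lambda r\le 1\}$, a Riccati-type ODE lemma for $r\ge 1/\lambda$ (after $y=r^{(n-1)/2}K^{-1/2}\Phi_\lambda$), and an $L^2$ energy estimate plus interior elliptic regularity to show the $\gm_2$-correction is $O(\lambda^\theta)$ in $L^\infty$. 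Your plan is plausible, but the uniform-in-$\lambda$ control through $\lambda r\sim 1$ and the positivity after the perturbation are precisely the delicate points, and they are only asserted, not argued.

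Two statements in your sketch are wrong as written. First, the claimed uniform asymptotic $\phi_\lambda\asymp\lambda^{-(n-1)/2}\langle r\rangle^{-(n-1)/2}e^{\lambda\int_0^r K}$ cannot hold uniformly as $\lambda\to 0^+$: for any normalization the correct profile is $\langle\lambda r\rangle^{-(n-1)/2}e^{\lambda\int_0^r K}$ (so $\phi_\lambda\asymp 1$ on $\lambda r\lesssim 1$ when normalized at the origin), and the two profiles differ by an unbounded factor $\sim\lambda^{-(n-1)/2}$ in the region $r\lesssim 1/\lambda$ --- exactly the region exploited in the critical iteration, where one takes $\lambda\sim\langle t\rangle^{-1}$ (the lower bound in the paper's estimates of $\xi_q$); in fact the paper only needs the upper bound together with a uniform constant lower bound $\phi_\lambda\ge c_1$. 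Second, the ``pointwise lower bound $u(t,x)\gtrsim\epsilon(R_0+t)^{-(n-1)/2}$ on the support'' cannot be extracted from the test-function identities and is false in general, since the wave flow does not preserve positivity in higher dimensions; the correct and sufficient substitute is the averaged bound $\int u\,e^{-\lambda_1 t}\phi_{\lambda_1}\,dV_\gm\gtrsim\epsilon$ for $t\ge 1$, which by H\"older and the light-cone support gives $\int|u|^p dV_\gm\gtrsim\epsilon^p\langle t\rangle^{(n-1)(1-p/2)}$, the input for both Kato's lemma and the slicing scheme. Finally, the theorem also asserts local existence and uniqueness of weak solutions in $C\dot H^{1/2}_{\rm comp}\cap L^{2(n+1)/(n-1)}_{t,x}$ with the finite-speed support property; your proposal assumes this, whereas the paper proves it via local Strichartz estimates on asymptotically Euclidean manifolds and a contraction argument, and derives the support property rather than imposing it.
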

\begin{rem}
The upper bound is sharp in general.
Actually,  in the case of nontrapping
asymptotically Euclidean manifolds \eqref{dl1}-\eqref{unelp},
it is known from the second author \cite{W17} that we have 
  \beeq\label{eq-life-compare}
T_\ep\ge \left\{
\begin{array}{ll }
  c_0 \ep^{\frac{2p(p-1)}{(n-1)p^{2} - (n+1)p - 2}}    & 2\le p<p_c(n), n=3,  \\
\exp(c_0\ep^{-p(p-1)})      &   p=p_c(n), n=4,
\end{array}\right.
\eneq
for some $c_0>0$.
\end{rem}
\begin{rem}
As we have said, we expect similar results hold for  asymptotically Euclidean manifolds  with
\eqref{eq-g2-ae2} instead of
\eqref{dlfjia}. It will be interesting to investigate the blow up theory, as well as the high dimensional existence theory, in this setting.
\end{rem}

For the strategy of proof, we basically follow the test function method of
 Yordanov-Zhang \cite{YorZh06}, Zhou \cite{Zh07} and Wakasa-Yordanov \cite{WaYo18-1pub}. The main innovation in our proof of Theorem \ref{thm-1-AE} is 
 the existence of  a class of  generalized  ``eigenfunctions" for 
 the Laplace-Beltrami operator,
 $\Delta_{\gm}\phi_\la=\lambda^{2}\phi_\la$, with small parameter $\lambda\in (0,\la_0)$ and desired (uniform) asymptotical behavior, see Lemma \ref{elp}.

In recent years, the closely related semilinear damped wave equations
\beeq
\label{2.1'}
\begin{cases}
\pa^{2}_{t}u-\Delta_{\gm}u+b(t)u_{t}=|u|^{p}\\
u(0,x)=\ep u_0(x), u_{t}(0,x)=\ep u_1(x)\\
\end{cases}
\eneq
 have also received much attention. In particular, the problem with $\gm=\gm_0$ and typical damping term $b(t)=\mu (1+t)^{-\be}$, $0<\mu\in\R$, $\be\in \R$ has been extensively investigated. In general, the behavior of solutions of \eqref{2.1'} depends on $\mu$ and $\be$.
For the case $\be <1$, the damping term is strong enough to make the  problem behaves totally different from the wave equations, and the problem has been well-understood. 
For the scale-invariant case $\be=1$, it appears that the critical power is $p_c(n+\mu)$ for relatively small $\mu>0$,
see, e.g.,
 Ikeda-Sobajima \cite{IkSo18},
 Tu-Lin \cite{TuLin17p1}.
See \cite{LaiTa18} for more discussion on the history.

For the remaining case, $\be>1$ (which is also referred as the scattering case), where the damping term is integrable, it is natural to expect that the problem behaves like the nonlinear wave equations without damping term, regardless of the sign of $b(t)$. 
For the blow up part, Lai-Takamura \cite{LaiTa18}  proved blow up results 
with 
$\gm = \gm_{0}$, $0\le b(t)\in L^{1}$ for $1<p<p_c(n)$, together with upper bound of the lifespan
\beeq\label{eq-life-subcrit}
T_\ep\le C \ep^{\frac{2p(p-1)}{(n-1)p^{2} - (n+1)p - 2}}, n\geq 2\ ,
\eneq
under the assumption that
\beeq\label{eq-fsp}\supp u\subset \{(x,t)\in\R^n\times [0, T_\ep): |x|\le t+R\}\eneq
for some $R>0$.
Here, the sign assumption on $b$ is removed in a recent work of Ikeda-Tu-Wakasa \cite{MaTuWa}.
For the critical case, $p=p_c(n)$,
with $\gm_{1}=\gm_0$
 and $0\le b(t)\in L^1$, 
 under the assumption that
\eqref{eq-fsp},
 Wakasa-Yordanov 
\cite{WaYo18-2-pub}
obtained the expected exponential upper bound of the lifespan
\beeq\label{eq-life-crit}
T_\ep\le \exp(C\ep^{-p(p-1)})\ .
\eneq
On the other hand, the existence theory depends on the spatial dimension and the high dimensional results remain open. The global existence for $p>p_c(n)$ and general lower order terms has been verified by the authors \cite{LW2018}
for two dimensional Euclidean space, and 
nontrapping asymptotically Euclidean manifolds $(\R^n, \gm)$ with $n=3, 4$, $\gm_1$ satisfying 
\eqref{dl3}-\eqref{dl2} and $\gm_2$ 
 satisfying 
\eqref{eq-g2-ae2}.

Now, we are ready to present our second main theorem, 
for semilinear damped wave equations
\eqref{2.1'} on asymptotically Euclidean manifolds \eqref{dl1}-\eqref{unelp},
where we merely assume $b(t)\in L^1$
 and there are no any sign condition even for the critical case.
\begin{thm}
\label{thm-2-AE-damp}
Let $n\geq 2$, $b(t)\in L^1(\R_+)$ and $1<p\le p_{c}(n)$. Consider \eqref{2.1'} posed on asymptotically Euclidean manifolds \eqref{dl1}-\eqref{dlfjia}.
 Assuming that
the data are nontrivial satisfying \eqref{hs2}, 
  then we have the same conclusion as in Theorem \ref{thm-1-AE}.
 \end{thm}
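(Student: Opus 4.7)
The plan is to reduce Theorem \ref{thm-2-AE-damp} to the test function argument that proves Theorem \ref{thm-1-AE}, with the damping absorbed via the integrating factor $e^{B(t)}$, where $B(t) := \int_0^t b(s)\,ds$. Since $b \in L^1(\R_+)$ (no sign needed), $\|B\|_{L^\infty(\R_+)} \le \|b\|_{L^1}$, so $e^{B(t)}$ is uniformly bilaterally comparable to $1$. Multiplying \eqref{2.1'} by $e^{B(t)}$ rewrites the linear part in the divergence form $(e^B u_t)_t - e^B \Delta_{\gm} u = e^B |u|^p$, hiding the first-order damping inside a conservative time derivative.

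Next, I would take the test function $\Psi_\lambda(x,t) = \phi_\lambda(x)\,\eta_\lambda(t)$, where $\phi_\lambda$ is the generalized eigenfunction from Lemma \ref{elp} (used already in Theorem \ref{thm-1-AE}) and $\eta_\lambda$ is the asymptotically decaying solution of the ODE
\[
\eta_\lambda''(t) + b(t)\,\eta_\lambda'(t) - \lambda^2 \eta_\lambda(t) = 0.
\]
Writing $\eta_\lambda(t) = e^{-\lambda t} m_\lambda(t)$ converts this into a perturbation of the $\lambda$-independent equation $m'' + b\,m' = 0$, and a Gronwall argument exploiting only $\|B\|_{L^\infty}\le\|b\|_{L^1}$ yields $\eta_\lambda(t) \asymp e^{-\lambda t}$ uniformly for $\lambda \in (0,\lambda_0)$, without any sign hypothesis on $b$. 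With this choice $\Psi_\lambda$ satisfies the conjugate damped equation $\Psi_{\lambda,tt} + b\,\Psi_{\lambda,t} - \Delta_{\gm}\Psi_\lambda = 0$, so multiplying \eqref{2.1'} by $e^B \Psi_\lambda$ and integrating by parts twice in $t$ produces the clean identity
\[
\Bigl[\int_{\R^n} e^{B} \bigl(\Psi_\lambda u_t - \Psi_{\lambda,t}\,u\bigr)\,dx\Bigr]_0^T \;=\; \int_0^T\!\!\int_{\R^n} e^{B}\,\Psi_\lambda\,|u|^p\,dx\,dt,
\]
identical in form to the undamped identity modulo the uniformly bounded factor $e^B$.

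From here the argument is structurally the same as for Theorem \ref{thm-1-AE}: one forms the moment functional $F(t) = \int u\,\psi\,dx$ for a suitable (possibly $\lambda$-averaged) positive weight $\psi$, establishes the lower bound $F(t) \gtrsim \ep$ together with the nonlinear integral inequality $F(t) \gtrsim \int_0^t (t-s)\,\<s\>^{-k}\,F(s)^p\,ds$ for the usual $k=k(n,p)$, and then invokes the iteration/slicing ODE blow-up argument of \cite{YorZh06, Zh07} for $1<p<p_c(n)$ and of \cite{WaYo18-1pub, WaYo18-2-pub} for the critical $p=p_c(n)$; the two-dimensional refinement \eqref{eq-life-special} is recovered by the modification of \cite{IKTW, Taka15}. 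All intermediate constants pick up at most factors of $e^{\pm\|b\|_{L^1}}$, so the $\ep$-dependence of the lifespan bounds is preserved. The main obstacle is the construction of $\eta_\lambda$ with the uniform asymptotics $\eta_\lambda \asymp e^{-\lambda t}$ for small $\lambda$ in the absence of any sign on $b$; overcoming this is precisely what makes the pure $L^1$ hypothesis on $b$ sufficient to carry the full strength of the undamped blow-up theorem across.
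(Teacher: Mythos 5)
Your overall strategy (integrating factor $e^{B(t)}$, $B=\int_0^t b$, plus separated test functions $\phi_\la(x)\eta_\la(t)$ solving the damped equation itself) is a legitimate alternative to what the paper does, and the weighted Green identity you write is correct since $e^{B}(\pa_t^2+b\pa_t-\Delta_\gm)$ is formally self-adjoint. The paper instead removes the damping exactly by the change of variables $s=\int_0^t e^{-B(\tau)}d\tau$, which turns \eqref{2.1'} into $\pa_s^2u-\tilde m^2(s)\Delta_\gm u=\tilde m^2 |u|^p$ with $\tilde m\simeq 1$, and then all temporal factors are solutions of $y''=\la^2\tilde m^2(s)y$, handled uniformly in $\la$ by Lemma \ref{thm-ode} and Lemma \ref{Le3.1}. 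The two routes are close in spirit; the change of variables has the advantage that the sign-free hypothesis $b\in L^1$ enters only through $\tilde m\in[\de_1,\de_1^{-1}]$ and the ODE analysis is done once, for a damping-free equation.

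The genuine gap in your write-up is precisely at the step you identify as the main obstacle: the uniform two-sided bound $\eta_\la\asymp e^{-\la t}$ for $\la\in(0,\la_0)$. Setting $\eta_\la=e^{-\la t}m_\la$ gives $m''+(b-2\la)m'-\la b\,m=0$; the term $-\la b\,m$ is indeed a small ($L^1$-norm $\la\|b\|_{L^1}$) perturbation, but $-2\la m'$ is not a perturbation of $m''+bm'=0$ in any integrated sense: it acts over the time scale $t\sim\la^{-1}$ and is exactly what creates the growing mode $e^{2\la t}$, so a Gronwall estimate ``exploiting only $\|B\|_{L^\infty}\le\|b\|_{L^1}$'' cannot close uniformly in $\la$. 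One must genuinely select the decaying branch (e.g.\ by a Riccati argument as in Lemma \ref{thm-ode}, a Levinson-type theorem, or reduction of order $\eta=y\int_t^\infty e^{-B}y^{-2}$ off a growing solution $y\asymp e^{\la t}$ whose uniform bounds themselves require the Lemma \ref{thm-ode}-type analysis), and also prove positivity/lower bounds of $m_\la$, which is not automatic without a sign condition on $b$. A second, related omission: for the critical case $p=p_c(n)$ the slicing machinery you invoke does not run on the decaying solution alone; it needs the family of backward solutions of the damped ODE with data $y(T)=0$, $y'(T)=-1$ and uniform $\sinh$-type lower bounds (the analogue of Lemma \ref{Le3.1} and the $\xi_q$ estimates), and you cannot simply cite \cite{WaYo18-1pub} (no damping) or \cite{WaYo18-2-pub} (which uses $0\le b$) for these sign-free damped estimates. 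Both gaps are fixable — indeed the cleanest fix is the paper's own change of variables, after which your $\eta_\la$ and the backward solutions are exactly the functions controlled by Lemmas \ref{thm-ode} and \ref{Le3.1} — but as written the crucial uniform ODE input is asserted rather than proved.
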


For the strategy of proof of Theorem \ref{thm-2-AE-damp}, we perform
 a change of variable
to transfer
the problem \eqref{2.1'} to the equivalent 
problem
\beeq
m^2(t)\pa^{2}_{t}u-\Delta_{\gm}u=|u|^{p}\ , 
\eneq
where $m\simeq 1$,
as long as $b(t)$ is integrable. This is one of the main innovations 
 in our proof of Theorem \ref{thm-2-AE-damp}, which completely avoids the sign condition on $b(t)$. 

\begin{rem}
The proof of local existence of \eqref{2.1} and
\eqref{2.1'}
ensures the obtained solution satisfies finite speed of propagation. Actually,
for linear wave equation $\pa^{2}_{t}u-\Delta_{\gm_1}u+b(t)u_{t}=0$ with the initial data \eqref{hs2}, the support of solution $u$ satisfies
\beeq
\label{fspyl}
\supp u\subset \{(t, x); \int^{|x|}_{0}K(\tau)d\tau\leq t+R_{1}\},
\eneq
where $R_{1}=\int_{0}^{R_{0}}K(\tau)d\tau$.
As $\gm_2$ is short-range perturbation, which does not affect the speed of propagation too much,
we still have \eqref{fspyl}, with possibly bigger $R_1$, for solutions to
linear wave equations $\pa^{2}_{t}u-\Delta_{\gm}u+b(t)u_{t}=0$, and so is the support of weak solutions to
 \eqref{2.1}  and
\eqref{2.1'}. 
This helps removing
the finite speed of propagation assumption 
\eqref{eq-fsp} in previous works.
See Section \ref{sec-lwp} for the proof of local existence and uniqueness of weak solutions.
\end{rem}

 \medskip

\subsubsection*{Outline} Our paper is organized as follows. In the next section, we
present the proof of 
local existence and uniqueness of weak solutions. As corollary, we 
derive the finite speed of propagation, \eqref{fspyl}, as the property of the solutions instead of the assumption.
In \S\ref{sec-elliptic}, we study
the existence of 
certain generalized ``eigenfunction" for elliptic equation $\Delta_{\gm}\phi=\lambda^{2}\phi$, with $0<\la<\la_0$ for sufficiently small $\la_0>0$, which plays a key role in the construction of the test function.
In \S\ref{sec-sub}, we 
present a unified proof of 
Theorems \ref{thm-1-AE}-\ref{thm-2-AE-damp}, in the subcritical case,
by introducing a change of variable.
The last \S\ref{sec-crit} is devoted to the proof of the critical case, where the whole class of eigenfunctions with parameter $0<\la<\la_0$ are exploited.

\section{Local well posedness}\label{sec-lwp}
In this section, we
prove
local existence and uniqueness of weak solutions.

Our proof will rely on the local Strichartz estimates
 for the wave operator $\pa^{2}_{t}-\Delta_{\gm}+b(t)\pa_{t}$.
The local Strichartz estimates for the  wave equations with
variable coefficients
have been  extensively studied, see
Tataru \cite{Ta02} and references therein.
In the setting of asymptotically Euclidean manifolds, 
the following local 
 Strichartz estimates has been obtain in Sogge-Wang \cite[Proposition 5.4]{SW10},
based on Tataru \cite{Ta02}.
\begin{proposition}[Local homogeneous Strichartz estimates]
Let  $n\ge 2$, $b\in L^1$,
$(\R^n,\gm)$ be
asymptotically Euclidean manifold with
$$
\nabla^\be (g_{jk}-\de_{jk})=\mathcal{O}(\<r\>^{-\rho-|\be|}), |\be|\le 2\ ,$$
for some $\rho>0$. 
Then 
for any $s\in [0, 1]$,
and $(s, q, r)$ admissible with $r<\infty$, that is 
$$\frac{1}{q}\leq \min\left(\frac{1}{2}, \frac{n-1}{2}
\left(\frac{1}{2}-\frac{1}{r}\right)\right), s=n\left(\frac{1}{2}-\frac{1}{r}\right)-\frac{1}{q}\ ,$$
we have the local homogeneous Strichartz estimates 
\beeq
\|u\|_{L^{q}_{t\in[0, 1]}L_{x}^{r} \cap L^\infty_{t\in [0,1]} \dot H^s}
+\|\pa u\|_{L^\infty_{t\in [0,1]} \dot H^{s-1}}
\les \|u(0)\|_{\dot{H}^{s}}+\|u_t(0)\|_{\dot{H}^{s-1}}\ ,
\eneq
for solutions to 
$(\pt^2-\Delta_\gm+b(t)\pt)u=0$.
\end{proposition}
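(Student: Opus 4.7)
The plan is to treat the lower-order term $b(t)\pa_t u$ as a forcing on the right-hand side, rewriting the equation as $(\pt^2-\Delta_\gm)u=-b(t)\pa_t u$, and to reduce everything to Strichartz estimates for the free wave operator $\pt^2-\Delta_\gm$ on $(\R^n,\gm)$. The homogeneous version of the latter, under the stated decay on $g_{jk}-\de_{jk}$, is precisely what Tataru \cite{Ta02} and Sogge-Wang \cite[Proposition 5.4]{SW10} supply, so the only real work is to propagate that estimate through the $b(t)\pa_t$ perturbation.

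From the homogeneous estimate, Duhamel's formula combined with Minkowski's inequality in time yields the inhomogeneous companion
\[
\|v\|_{S(I)}\les \|v(\inf I)\|_{\dot H^s}+\|v_t(\inf I)\|_{\dot H^{s-1}}+\|F\|_{L^1_{t\in I}\dot H^{s-1}}
\]
for solutions of $(\pt^2-\Delta_\gm)v=F$ on a subinterval $I\subset[0,1]$, where $\|v\|_{S(I)}$ abbreviates the full left-hand side of the claim. Applied to $F=-b(t)\pa_t u$, and using that $b$ depends only on time, $\|b\,\pa_t u\|_{L^1_{t\in I}\dot H^{s-1}}\le \|b\|_{L^1(I)}\|\pa_t u\|_{L^\infty_{t\in I}\dot H^{s-1}}\le \|b\|_{L^1(I)}\|u\|_{S(I)}$, so
\[
\|u\|_{S(I)}\le C_0\bigl(\|u(\inf I)\|_{\dot H^s}+\|u_t(\inf I)\|_{\dot H^{s-1}}\bigr)+C_0\|b\|_{L^1(I)}\|u\|_{S(I)}.
\]

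Since $b\in L^1(\R_+)$, partition $[0,1]$ into finitely many subintervals $[t_k,t_{k+1}]$, the number $N$ of which depends only on $\|b\|_{L^1(\R_+)}$, such that $\|b\|_{L^1([t_k,t_{k+1}])}<(2C_0)^{-1}$ on each; on each such piece the last term above is absorbed into the left, and the estimates are chained by using $(u(t_{k+1}),u_t(t_{k+1}))$, whose $\dot H^s\times \dot H^{s-1}$ norms are controlled by $\|u\|_{S([t_k,t_{k+1}])}$, as initial data for the next piece. This yields the claimed bound with overall constant $\sim (2C_0)^N$. The main subtlety is simply verifying that the absorption is legitimate, which works because the Strichartz norm on the left genuinely dominates the energy piece $\|\pa_t u\|_{L^\infty_t\dot H^{s-1}}$ that is picked up from the perturbation; crucially, no sign condition on $b$ is required, consistent with the paper's treatment of the scattering damping regime elsewhere.
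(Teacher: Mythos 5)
Your proposal is correct and takes essentially the same route as the paper: the paper likewise treats $b(t)\pa_t u$ perturbatively on top of the $b=0$ estimate of Sogge--Wang/Tataru, combining it with Duhamel's principle and closing with Gronwall's inequality, whereas you close by splitting $[0,1]$ into finitely many subintervals on which $\|b\|_{L^1}$ is small and absorbing -- an equivalent use of the integrability of $b$.
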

Notice that the result was originally proved for $b(t)=0$. The result when $b(t)\in L^{1}$ follows directly if we combine it with Duhamel's principle and the Gronwall's inequality. Moreover, by Duhamel's principle and Christ-Kiselev lemma \cite{ChristK01}, we immediately get the following:
\begin{coro}
Let $s\in [0, 1]$, $r,\tilde{r}<\infty$ and $(s, q, r)$, $(1-s, \tilde{q}, \tilde{r})$ be admissible.
The solution of equation $(\pt^2-\Delta_\gm+b(t)\pt)u=F$ satisfies
\beeq
\label{NoHStri}
\|u\|_{L^\infty_{t\in [0,1]} \dot H^{s}}+
\|u\|_{L^{q}_{t\in[0, 1]}L_{x}^{r}}\les\|u(0)\|_{\dot{H}^{s}}+\|u_t(0)\|_{\dot{H}^{s-1}}+\|F\|_{L^{\tilde{q}'}_{t}L_{x}^{\tilde{r}'}}\ .
\eneq
In particular,
when $q=r=\tilde q=\tilde r=2\frac{n+1}{n-1}$ and $s=1/2$, we have
\beeq
\label{eq-Stri}
\|u\|_{L^\infty_{t} \dot H^{1/2} \cap L^{2\frac{n+1}{n-1}}( [0, 1]\times \R^n)}\les \|u(0)\|_{\dot{H}^{1/2}}+\|u_{t}(0)\|_{\dot{H}^{-1/2}}+
\|F\|_{L^{2\frac{n+1}{n+3}}( [0, 1]\times \R^n)}
\ .
\eneq

\end{coro}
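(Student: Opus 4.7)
The plan is to deduce the inhomogeneous estimate \eqref{NoHStri} from the homogeneous estimate in the preceding proposition via Duhamel's principle and the Christ–Kiselev lemma, in a way that cleanly absorbs the integrable damping $b(t)$. Let $S_b(t,\tau)$ denote the solution operator for $(\pt^2-\Delta_\gm+b(t)\pt)v=0$ with data prescribed at time $\tau$, so that by Duhamel
\[
u(t)=S_b(t,0)(u(0),u_t(0))+\int_0^t S_b(t,\tau)(0,F(\tau))\,d\tau.
\]
Since $b\in L^1$, a Gronwall-type bootstrap in the homogeneous Strichartz estimate of the proposition shows that $S_b(t,\tau)$ satisfies the same Strichartz bound on $[\tau,1]$ uniformly in $\tau\in[0,1]$, so the first term on the right is handled directly.

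For the Duhamel integral, I would first consider the \emph{untruncated} operator
\[
\mathcal T F(t)=\int_0^1 S_b(t,\tau)(0,F(\tau))\,d\tau.
\]
Using the homogeneous Strichartz estimate for the adjoint problem (which has the same structural form, still with $b\in L^1$), duality gives the bound
\[
\Bigl\|\int_0^1 S_b(\tau,0)^*(0,G(\tau))\,d\tau\Bigr\|_{\dot H^{1-s}\times\dot H^{-s}}\les \|G\|_{L^{\tilde q'}_tL^{\tilde r'}_x},
\]
for $(1-s,\tilde q,\tilde r)$ admissible; composing with the direct Strichartz estimate then yields $\|\mathcal T F\|_{L^\infty_t\dot H^s\cap L^q_tL^r_x}\les\|F\|_{L^{\tilde q'}_tL^{\tilde r'}_x}$.

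The step from $\mathcal T$ to the retarded (truncated) Duhamel integral $\int_0^t$ is exactly the setting of the Christ–Kiselev lemma, which applies whenever $\tilde q'<q$; the hypothesis $r,\tilde r<\infty$ together with admissibility guarantees $q,\tilde q>2$, hence $\tilde q'<2<q$ (and in the borderline diagonal case used in \eqref{eq-Stri} one checks $\tilde q'=\tfrac{2(n+1)}{n+3}<\tfrac{2(n+1)}{n-1}=q$). This delivers \eqref{NoHStri}. The specialisation to \eqref{eq-Stri} is then just a matter of verifying that $(q,r)=(\tfrac{2(n+1)}{n-1},\tfrac{2(n+1)}{n-1})$ with $s=1/2$ is admissible, which is the standard conformal/endpoint wave pair.

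The main obstacle is ensuring that the passage through Duhamel and Christ–Kiselev genuinely goes through in the presence of the time-dependent, possibly sign-indefinite damping $b(t)$. Two small points must be checked: (i) the homogeneous estimate for the \emph{adjoint} equation (which features $-b(t)\pt$ after integration by parts in $t$) is of the same form, since $-b\in L^1$ as well; and (ii) the constants in $S_b(t,\tau)$ are uniform in $\tau$, which follows from $\|b\|_{L^1[0,1]}<\infty$ via Gronwall. Once these are in place, the argument is the standard Keel–Tao–Christ–Kiselev scheme and no further ingredients are needed.
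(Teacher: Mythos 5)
Your argument is correct and is essentially the paper's own proof: the paper likewise deduces \eqref{NoHStri} from the homogeneous Strichartz estimate (where $b\in L^1$ has already been absorbed via Duhamel and Gronwall) by Duhamel's principle together with the Christ--Kiselev lemma, and then specializes to the diagonal admissible pair with $s=1/2$ to get \eqref{eq-Stri}. One small caveat: $r,\tilde r<\infty$ alone does not force $q,\tilde q>2$ (admissibility permits $q=2$), but the strict inequality $\tilde q'<q$ required by Christ--Kiselev does hold in the case \eqref{eq-Stri} that is actually used, exactly as you verify.
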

Thus we can apply \eqref{eq-Stri} to \eqref{2.1'} to get the local well posedness.
\begin{lem}\label{thm-lwp}
Let $n\geq 2$ and $1<p\leq\frac{n+3}{n-1}$. Then the Cauchy problem \eqref{2.1'} with initial data \eqref{hs2} is locally well posed in $\dot H^{1/2}_{\rm comp}\times \dot H^{-1/2}_{\rm comp}$ for sufficiently small $\ep$.
\end{lem}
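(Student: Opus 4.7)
The approach is a standard Strichartz-based contraction mapping argument. Let $q=2(n+1)/(n-1)$ and $q'=2(n+1)/(n+3)$, and work in $X_T=C([0,T];\dot H^{1/2})\cap L^q([0,T]\times\R^n)$ for $T\in(0,1]$, with the Duhamel iteration $\Phi(u)=u_{\rm lin}+\mathcal{D}(|u|^p)$, where $u_{\rm lin}$ solves the linear Cauchy problem $(\pt^2-\Delta_\gm+b(t)\pt)u_{\rm lin}=0$ with data $(\ep u_0,\ep u_1)$ and $\mathcal{D}$ is the Duhamel operator controlled by \eqref{eq-Stri}. By \eqref{eq-Stri}, $\|u_{\rm lin}\|_{X_T}\les \ep(\|u_0\|_{\dot H^{1/2}}+\|u_1\|_{\dot H^{-1/2}})\les \ep$.

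The hypothesis $1<p\le (n+3)/(n-1)$ is equivalent to $pq'\le q$. Given finite speed of propagation for the underlying flow (discussed below), any iterate $u$ remains supported in a fixed bounded ball $B$ for $t\in[0,T]$, so H\"older on a set of finite measure gives
$$\||u|^p\|_{L^{q'}([0,T]\times B)}\les |[0,T]\times B|^{1/q'-p/q}\,\|u\|_{L^q_{t,x}}^p,$$
where the measure factor is uniformly bounded for $T\le 1$ and tends to $0$ as $T\to 0$ in the subcritical range $p<(n+3)/(n-1)$ (in the critical case $pq'=q$ the exponent is zero and no support property is needed). Applying \eqref{eq-Stri} again together with the pointwise inequality $||a|^p-|b|^p|\les (|a|^{p-1}+|b|^{p-1})|a-b|$ and H\"older, I obtain the matching Lipschitz bound for $\Phi(u)-\Phi(v)$ in terms of $\|u-v\|_{X_T}$. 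Consequently $\Phi$ preserves and contracts a ball of radius $C\ep$ in $X_T$ for $\ep$ sufficiently small, so the Banach fixed point theorem yields a unique weak solution $u\in X_T$.

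The main obstacle is establishing finite speed of propagation at the low regularity $\dot H^{1/2}_{\rm comp}$, which is what upgrades the abstract Strichartz solution to the stated form. I would first prove the linear finite speed of propagation for $\pt^2-\Delta_\gm+b(t)\pt$ by a standard multiplier identity on truncated backward cones $\{\int_0^{|x|}K(\tau)\,d\tau\le t_0-t+R_1\}$ adapted to $\gm_1$, using the uniform ellipticity \eqref{unelp} to close the energy and treating $\gm_2$ as a compactly absorbable short-range perturbation under \eqref{dlfjia} (at the cost of slightly enlarging $R_1$). This gives \eqref{fspyl} first for smooth data, and then via Strichartz-based approximation for $\dot H^{1/2}_{\rm comp}$ data. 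Transferring to the nonlinear problem through Duhamel, since $|u|^p$ is supported where $u$ is, the fixed point inherits \eqref{fspyl}; this justifies the H\"older step above and shows $u(t)\in\dot H^{1/2}_{\rm comp}$, completing the proof.
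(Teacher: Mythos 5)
Your proposal is correct and follows essentially the same route as the paper: the Strichartz estimate \eqref{eq-Stri} with $q=2(n+1)/(n-1)$, H\"older's inequality on the compact support region (using $pq'\le q$, i.e. $p\le (n+3)/(n-1)$), and a small-data contraction/iteration argument, with finite speed of propagation invoked to justify the bounded-support step. The only difference is presentational (fixed-point map versus explicit Picard iteration, and a more detailed sketch of the finite speed of propagation, which the paper asserts in a remark), so there is nothing to correct.
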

\begin{proof}
With help of the Strichartz estimates \eqref{eq-Stri}, it is easy to prove local well posedness for compactly supported small data, by standard contraction mapping principle. We only give the sketch of the proof of local existence in the time interval $[0,1]$. 
Let $q=2\frac{n+1}{n-1}$ and
$$\|u\|_{X}=\|u\|_{L^\infty_{t\in [0,1]} \dot H^{1/2}}+
\|u\|_{L^{q}( [0, 1]\times \R^n)}.$$
We set $u^{(0)}=0$ and recursively define $u^{(k+1)}$ to be the solution to the linear equation
\beeq
\left\{\begin{array}{l}\pa^{2}_{t}u^{(k+1)} - \Delta_{\gm}u^{(k+1)}+b(t) \pt u^{(k+1)}=|u^{(k)}|^{p},\quad
\\
u^{(k+1)}(0,x) = \ep u_0,
 \pt u^{(k+1)}(0,x) = \ep u_1.\end{array}
 \right.
 \label{11.1}
\eneq
Assuming, by induction, for some $k\ge 0$, $u^{(k)}$ is well-defined in $X$ 
with
$\|u^{(k)}\|_X\les\ep$
and 
 finite speed of propagation
\beeq
\label{fspxz}
\supp u^{(k)}\subset \{(t, x); t\in [0, 1], \int^{|x|}_{0}K(\tau)d\tau\leq t+R_{1}\}:=A
\ .
\eneq
Then we are reduced to prove the boundedness of $u^{(k+1)}$,
as well as convergence, in $X$ 
for small enough $\ep$. We only show the boundedness since the convergence follows the same way.

As $q=2\frac{n+1}{n-1}$, we
observe that
$q/q'=(n+3)/(n-1)$.
Since $1<p\le (n+3)/(n-1)$, we have
$p q'\le q$.
Applying \eqref{eq-Stri} to \eqref{11.1}, 
we have 
\beeq\label{eq-cont}\|u^{(k+1)}\|_{X}\les \ep+\||u^{(k)}|^p\|_{L^{q'}_{t,x}}
\les \ep+\|u^{(k)}\|_{L^{pq'}_{t,x}}^p
\les \ep+\|u^{(k)}\|_{L^{q}_{t,x}}^p,\eneq
where we have used H\"older's inequality in the last inequality and the fact that we have $(t,x)\in A$ which is compact.
Thus the (uniform) boundedness could be easily obtained by continuity and induction from \eqref{eq-cont},
when $\ep$ is smaller than some $\ep_{1}>0$.
\end{proof}

\section{
Positive entire solutions to
the equation $\Delta_{\gm}\phi=\lambda^{2}\phi$}
\label{sec-elliptic}

In this section, for asymptotically Euclidean manifolds \eqref{dl1}-\eqref{unelp}, we prove the existence
 of certain generalized ``eigenfunction", denoted by $\phi_\la(x)$,  for elliptic equation
 \beeq
\label{1.1}
\Delta_{\gm}\phi_\la=\lambda^{2}\phi_\la\ ,
\eneq
 with desired (uniform) asymptotic behavior for any $0<\la\le \la_0$ with sufficiently small $\la_0>0$. 
 Such solutions are also known as positive entire solutions in literature.
 These solutions  will play a key role in the construction of the test functions.

\begin{lem}
\label{elp}
Let $n\geq 2$ and 
$(\R^n, \gm)$ be asymptotically Euclidean manifold with \eqref{dl1}-\eqref{dlfjia}.
Then there exist $\la_0, c_1>0$ such that for any
$0<\la\le \la_0$, there is a solution of \eqref{1.1} satisfying
\beeq
\label{1.50}
c_{1} 
<\phi_\la(x) < c_1^{-1}\< \la r\>^{-\frac{n-1}{2}}e^{\la \int^{r}_{0}K(\tau)d\tau} \ .
\eneq
\end{lem}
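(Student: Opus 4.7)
I would split the construction into two stages: first build a radial positive entire solution $\phi_\la^{(1)}$ for the spherically symmetric model $\Delta_{\gm_1}\phi=\la^2\phi$, then correct it by the short-range tail $\gm_2$ perturbatively.

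For the radial model, the equation takes the Sturm--Liouville form $\pa_r(r^{n-1}K^{-1}\pa_r\phi_\la^{(1)})=\la^2Kr^{n-1}\phi_\la^{(1)}$, so integrating twice from $r=0$ gives the Volterra integral equation
\[
\phi_\la^{(1)}(r)=1+\la^2\int_0^r\!\frac{K(t)}{t^{n-1}}\int_0^t K(\tau)\tau^{n-1}\phi_\la^{(1)}(\tau)\,d\tau\,dt,
\]
which I would solve by Picard iteration. Positivity of the kernel then forces $\phi_\la^{(1)}>0$ and $\pa_r\phi_\la^{(1)}\ge 0$, whence $\phi_\la^{(1)}\ge 1$ on $[0,\infty)$ for every $\la\in(0,\la_0)$, giving the lower bound. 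For the upper bound I would pass to the geodesic radius $s=\int_0^r K(\tau)\,d\tau$ and the unknown $\chi(s)=r(s)^{(n-1)/2}\phi_\la^{(1)}(r(s))$; a direct computation converts the ODE into the Schr\"odinger form
\[
\pa_s^2\chi-\la^2\chi=W(s)\chi,\quad W(s)=\frac{(n-1)(n-3)}{4r^2K^2}-\frac{(n-1)K'}{2rK^3},
\]
with $|W(s)|\les\<s\>^{-2}$ by \eqref{dl2}. A Jost-type solution $e_\la$ with $e_\la(s)e^{-\la s}\to 1$ as $s\to\infty$ is then built from the Volterra equation
\[
e_\la(s)=e^{\la s}-\int_s^\infty\!\frac{\sinh(\la(t-s))}{\la}W(t)e_\la(t)\,dt,
\]
and writing the regular solution at the origin as a linear combination of $e_\la$ and an independent second solution, with matching constants tracked via Wronskian identities and an analysis of the transition region $\la s\sim 1$, yields the uniform upper bound $\phi_\la^{(1)}(r)\les\<\la r\>^{-(n-1)/2}e^{\la s}$ for $\la\in(0,\la_0)$.

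To install the $\gm_2$ correction, I would set $\phi_\la=\phi_\la^{(1)}+v_\la$ so that
\[
(\Delta_\gm-\la^2)v_\la=-(\Delta_\gm-\Delta_{\gm_1})\phi_\la^{(1)}=:-f_\la.
\]
By \eqref{dlfjia} combined with the bounds on $\phi_\la^{(1)}$ and its first two derivatives from Step~1, $|f_\la|\les e^{-(\al-\la)s}$; once $\la_0<\al/2$, $f_\la$ decays exponentially in every weighted $L^p$. Inverting $\Delta_\gm-\la^2$ on suitable exponentially weighted Sobolev spaces via a weighted energy or Fredholm argument delivers $v_\la$ with $\|v_\la\|_{L^\infty}\to 0$ as $\la_0\to 0$, and shrinking $\la_0$ once more ensures $\|v_\la\|_\infty\le 1/2$; the bounds \eqref{1.50} for $\phi_\la$ then follow from those of $\phi_\la^{(1)}$, at the cost of a possibly smaller $c_1$.

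The hardest step I anticipate is the uniform-in-$\la$ upper bound in Step~1, specifically the matching between the regular solution at $r=0$ and the Jost solution $e_\la$ at infinity: the naive matching produces a multiplicative factor of order $\la^{-(n-1)/2}$ that has to be absorbed into the $\<\la r\>^{-(n-1)/2}$ in \eqref{1.50}. This forces the matching to be done in the transition region $s\sim 1/\la$ and to exploit exact cancellations between growing and decaying exponentials there, essentially a reworking of the small-argument/large-order asymptotics of modified Bessel functions of the first kind for the Euclidean model, adapted to the long-range radial perturbation $K$.
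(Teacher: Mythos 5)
Your two--stage scheme is structurally the same as the paper's: a radial analysis for $\gm_1$ split at $r\sim 1/\la$, followed by a perturbation in the exponentially small tail $\gm_2$ (the paper's Lemma 3.3 plus the argument of \S 3.3). Your treatment of the inner region is actually a nice simplification: normalizing $\phi^{(1)}_\la(0)=1$ and iterating the Volterra equation gives monotonicity, the free lower bound $\phi^{(1)}_\la\ge 1$, and via Gronwall a uniform upper bound and the derivative bounds $\pa_r\phi^{(1)}_\la\les\la^2 r\,\phi^{(1)}_\la$ up to $r\sim 1/\la$, which replaces the paper's Dirichlet problem, maximum principle, and the scaling/Arzel\`a--Ascoli compactness argument used there to get a uniform lower bound at the origin. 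Two soft spots, though. First, in the exterior region your displayed Volterra equation for the \emph{growing} Jost solution is ill--posed: with $e_\la(t)\sim e^{\la t}$ and $|W(t)|\simeq t^{-2}$ the integrand behaves like $e^{2\la(t-s)}t^{-2}$ and the integral over $(s,\infty)$ diverges; only the recessive solution $\sim e^{-\la s}$ can be built this way, the dominant one needing reduction of order or a forward IVP. More to the point, the matching difficulty you anticipate does not really arise: at $s_0\sim 1/\la$ your regular solution already carries the amplitude $\chi(s_0)\simeq\la^{-(n-1)/2}$ together with $0<\chi'(s_0)\les\la\,\chi(s_0)$, and the paper's Lemma 3.2 (the Riccati substitution $y=\exp(\la\int K+\int\mu)$ with $\int|\mu|\les 1$, using only $\|K'\|_{L^1}\les 1$ and $\|G\|_{L^1([1/\la,\infty))}\les\la$) converts exactly these data into the two--sided bound $y\simeq y_0e^{\la\int_{s_0}^s K}$, uniformly in $\la$; no Bessel--type uniform asymptotics or Wronskian bookkeeping is needed.

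Second, in the perturbation stage the conclusion $\|v_\la\|_{L^\infty}\to 0$ does not follow from $|f_\la|\les e^{-(\al-\la)s}$ alone: the operator $(\Delta_\gm-\la^2)^{-1}$ is not small as $\la\to0$ (and for $n=2$ its uniform boundedness on the naive weighted spaces is itself delicate), so you must use the $\la$--smallness of the source, namely $|f_\la|\les\la^2\<r\>e^{(\la-\al)\int_0^r K\,d\tau}$, which your derivative bounds do provide but which you never invoke. The paper handles this quantitatively: the energy identity gives $\|\psi\|_{L^2}\les1$ and $\|\nabla\psi\|_{L^2}\les\la$ (note $\|\psi\|_{L^2}$ is \emph{not} small, only the gradient is), then Gagliardo--Nirenberg gives smallness in some $L^{q_0}$, $q_0>2$, and a local elliptic estimate (Gilbarg--Trudinger, Theorem 8.17) upgrades this to $\|\psi\|_{L^\infty}\les\la^\theta$, after which $\la_0$ is shrunk exactly as you propose. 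With the growing--Jost construction repaired (or replaced by the Riccati lemma) and the $\la^2$ gain fed into a concrete $L^\infty$ estimate of this kind, your argument goes through and is essentially the paper's proof.
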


\begin{rem}
We could also prove 
the exponential lower bound, \beeq\label{eq-lowerbd}\phi_\la(x)\gtrsim \langle r\lambda \rangle^{-\frac{n-1}{2}}e^{\lambda\int^{r}_{0}K(\tau)d\tau}\ .\eneq
However, as it turns out that it is not necessary for the proof of the main theorems, we choose to put it in the remark to emphasize this fact.
\end{rem}

To give the proof of Lemma \ref{elp}, we will first present a proof for the case $\gm=\gm_1$, where we could exploit
the spherical symmetric property to
work in the corresponding 
 ordinary differential equation, near spatial infinity. Then due to the exponential feature of the short range perturbation $\gm_2$, a perturbation argument completes the proof of  Lemma \ref{elp}.

\subsection{Asymptotic behavior for certain ordinary differential equation}
As preparation, we present a key lemma which gives us the asymptotic behavior for solutions to certain ordinary differential equation, depending on parameter $\la\in (0,\la_0]$.

\begin{lem}\label{thm-ode}
Let $\la\in (0, \la_0]$, $\de_0\in (0, 1)$, $\ep>0$, $y_0>0$,  $K\in (\de_0, \de_0^{-1})$,
\beeq
\label{0-eq-Gest}
\|K'\|_{L^1([\ep \la_0^{-1},\infty))}\le \de_0^{-1},
\|G\|_{L^1([\ep \la^{-1},\infty))}\le \de_0^{-1} \la, \forall \la\in (0, \la_0]\ .
\eneq
Considering
\beeq
\label{0-1.2}
\begin{cases}
y''-\lambda^{2}K^{2}(r)y+G(r)y=0, r>\ep\la^{-1}\\
y(\ep \la^{-1})=y_0, y'(\ep \la^{-1})=y_{1}\in (0, \de_0^{-1}\la y_0 ),
\end{cases}
\eneq
Then for any solution $y$ 
with $y, y'>0$,
we have the following uniform estimates, independent of $\la\in (0,\la_0]$,
\beeq
\label{0.1-eq-1.6}y
\simeq  y_0 e^{\lambda\int^{r}_{\ep/\la }K(\tau)d\tau}
\ ,\ r\ge \ep\la^{-1}\ .\eneq
Assume in addition
$1-\lambda^{-2}K^{-2}G\in (\de_0, \de_0^{-1})$, 
then 
the solution $y$ to
\eqref{0-1.2} satisfies
 $y, y'>0$ and
we have 
\beeq
\label{0.1-eq-1.6'} y'\simeq y_1+y_0\la (e^{\lambda\int^{r}_{\ep/\la }K(\tau)d\tau}-1)\ .\eneq
\end{lem}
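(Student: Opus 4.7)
The plan is to apply the exponential change of variable $y = e^{\la \Phi(r)} z$ with $\Phi(r) = \int_{\ep/\la}^r K(\tau)\, d\tau$, converting the ODE to
\[ z'' + 2\la K z' + (\la K' + G) z = 0, \qquad z(\ep/\la) = y_0, \quad z'(\ep/\la) = y_1 - \la K(\ep/\la) y_0. \]
The hypothesis on $y_1$ gives $|z'(\ep/\la)| \les \la y_0$, and the unperturbed equation $z'' + 2\la K z' = 0$ has bounded fundamental solutions $1$ and $\int e^{-2\la \int K}$, so I expect $z \simeq y_0$. Applying the integrating factor $e^{2\la \int K}$ to the first-order equation for $z'$ and integrating once more produces the Volterra representation
\[ z(r) = y_0 + z'(\ep/\la) \int_{\ep/\la}^r e^{-2\la \int_{\ep/\la}^t K}\,dt - \int_{\ep/\la}^r (\la K' + G)(s)\, z(s) \Big( \int_s^r e^{-2\la \int_s^t K}\,dt \Big) ds. \]
Since $K \ge \de_0$, each inner exponential integral is bounded by $1/(2\la \de_0)$; together with the hypotheses $\|K'\|_{L^1} \le \de_0^{-1}$ and $\|G\|_{L^1} \le \de_0^{-1}\la$, Gronwall's inequality yields the uniform upper bound $|z(r)| \le C y_0$, hence $y(r) \les y_0 e^{\la \Phi(r)}$. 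Plugging this back into the integral formula for $z'$ produces $|z'(r)| \les \la y_0$.

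For the matching lower bound in \eqref{0.1-eq-1.6}, I would exploit the sign hypothesis $y, y' > 0$ through the ``outgoing'' quantity $\tilde w := y' + \la K y$, which is automatically positive and satisfies $(\tilde w e^{-\la \Phi})' = (\la K' - G) y e^{-\la \Phi}$, with initial value $\tilde w(\ep/\la) = y_1 + \la K(\ep/\la) y_0 \ges \la y_0$. Inserting the upper bound $y e^{-\la \Phi} \les y_0$ and using the tail smallness of $\|K'\|_{L^1[\ep/\la,\infty)}$ and $\|G/\la\|_{L^1[\ep/\la,\infty)}$ (both can be made small by shrinking $\la_0$, so the Volterra perturbation is negligible), I obtain $\tilde w(r) e^{-\la \Phi(r)} \simeq \la y_0$. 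Combined with $|y'| = |z' + \la K z| e^{\la \Phi} \les \la y_0 e^{\la \Phi}$ and the identity $\la K y = \tilde w - y'$, this forces $y \ges y_0 e^{\la \Phi}$, completing \eqref{0.1-eq-1.6}.

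For the second assertion, the added hypothesis $1 - \la^{-2} K^{-2} G \in (\de_0, \de_0^{-1})$ supplies the uniform ellipticity $\la^2 K^2 - G \simeq \la^2$, so a Liouville--Green/WKB decomposition writes any solution of \eqref{0-1.2} as a superposition of two modes $\sim (\la^2 K^2 - G)^{-1/4} e^{\pm \int \sqrt{\la^2 K^2 - G}}$ with error controlled by the $L^1$ hypotheses; the initial conditions then force the coefficient of the growing mode to be $\simeq y_0$, which automatically gives $y, y' > 0$ and \eqref{0.1-eq-1.6}. Integrating $(\tilde w e^{-\la \Phi})' = (\la K' - G) y e^{-\la \Phi}$ with this asymptotic for $y$ inserted on the right, and then subtracting $\la K y$ from $\tilde w$, produces \eqref{0.1-eq-1.6'}. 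The main obstacle is the lower bound in the first assertion: the crude Gronwall constants are too large to close the bootstrap directly, so one must either exploit tail smallness through a small choice of $\la_0$ or iterate the Volterra equation carefully to recover the cancellation already visible in the unperturbed problem.
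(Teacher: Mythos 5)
Your upper bound is fine: the substitution $y=e^{\la\Phi}z$, the Volterra representation, the bound $\int_s^r e^{-2\la\int_s^t K}dt\le (2\la\de_0)^{-1}$ and Gronwall do give $z\les y_0$ and hence $y\les y_0e^{\la\Phi}$, $|z'|\les \la y_0$, with constants depending only on $\de_0$. The genuine gap is exactly where you flag it: the lower bound in \eqref{0.1-eq-1.6}. Your argument needs the perturbation $\int_{\ep/\la}^\infty(\la|K'|+|G|)\,(ye^{-\la\Phi})\,d\tau$ to be small compared with $\tilde w(\ep/\la)\ges \la\de_0 y_0$, but the hypotheses \eqref{0-eq-Gest} only give this quantity a bound of order $\de_0^{-1}\la y_0$ times the Gronwall constant, which is large. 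Shrinking $\la_0$ does not rescue the $G$-term: the hypothesis is $\|G\|_{L^1([\ep/\la,\infty))}\le \de_0^{-1}\la$, so the ratio to $\la$ stays of size $\de_0^{-1}$ no matter how small $\la_0$ is; and in the paper's application this is not a technicality, since $r^2G\to -\tfrac{(n-1)(n-3)}{4}\neq 0$ (and in the damped-wave application $\|K'\|_{L^1}\sim\|b\|_{L^1}$ is merely bounded), so no smallness is available. "Iterate the Volterra equation carefully" is precisely the missing idea, not a routine step: with a perturbation of the same order as the quantity you want to bound from below, linear perturbation theory alone does not close.

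The paper avoids smallness entirely by working with the logarithmic derivative $\mu=y'/y-\la K$, which satisfies the Riccati equation $\mu'+2\la K\mu+\mu^2=-G-\la K'$. Positivity of $y,y'$ gives $\mu>-\la K$, a differential inequality gives $\mu<3\de_0^{-1}\la$, and then—this is the key cancellation—dividing the Riccati equation by $K$ (resp.\ by $2\la K+\mu$) and integrating, with the $\mu'$ term handled by integration by parts, shows $|\int_{\ep/\la}^r\mu\,d\tau|\le C(\de_0)$ uniformly, even though $\int|\mu|$ is not controlled; exponentiating gives both sides of \eqref{0.1-eq-1.6} with constants depending only on $\de_0$. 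Two further steps in your sketch are also shaky: from $\tilde w\simeq\la y_0e^{\la\Phi}$ and an upper bound $y'\les\la y_0e^{\la\Phi}$ of the same order, the identity $\la Ky=\tilde w-y'$ does not force $y\ges y_0e^{\la\Phi}$ (the correct route would be to integrate $\frac{d}{dr}(ye^{\la\Phi})=\tilde we^{\la\Phi}$); and \eqref{0.1-eq-1.6'} cannot be obtained by subtracting $\la Ky$ from $\tilde w$, since near $r=\ep/\la$ one has $y'\simeq y_1$, possibly much smaller than either term—the paper instead notes that under the extra hypothesis $y''=(\la^2K^2-G)y\simeq\la^2 y$ with positive data (which immediately yields $y,y'>0$ by convexity, no WKB needed), and integrates $y''\simeq\la^2y_0e^{\la\Phi}$ once to get \eqref{0.1-eq-1.6'}.
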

\begin{proof}
By simple change of variable and linearity, we need only to prove for the case $\ep=y_0=1$.
The proof of \eqref{0.1-eq-1.6'} follows directly from \eqref{0.1-eq-1.6},
the equation $y''\sim \la^2 y$, and Newton-Leibniz formula.
In the following, we 
prove  
\eqref{0.1-eq-1.6} for the case $\ep=y_0=1$.

Let  $\mu(t)$ be such that
 \beeq
 \label{0-eq-1.1}
y=e^{\lambda\int^{r}_{1/\lambda}K(\tau)d\tau+\int_{1/\lambda}^{r}\mu(\tau)d\tau}\ .
 \eneq
Then by \eqref{0-1.2}, we have $\mu$ satisfies
\beeq
\label{0-eq-tuo}
\mu'+2\lambda K\mu+\mu^{2}=-G(r)-\lambda K'\ ,
\eneq
and
\beeq
\label{0-eq-1.3}
\mu(r)=\frac{y'}{y}-\lambda K(r)>-\lambda K(r)
>-\de_0^{-1}\la
\ . 
\eneq

By the assumptions on $y_{0}, y_{1}$ and $K>0$,
we have \beeq
\label{0-eq-1.4}
\mu(1/\lambda)= \frac{y_{1}}{y_{0}} -\lambda K(1/\lambda)<
\de_0^{-1}\lambda\ .
\eneq
Moreover, by \eqref{0-eq-tuo}, we have 
$$\mu'+2\lambda K\mu\le -G(r)-\lambda K'\ ,$$
and thus, in view of $K>0$, \eqref{0-eq-Gest} and \eqref{0-eq-1.4}, we have
\beeq
\mu(r) \leq \mu(1/\lambda)
e^{-2\lambda\int^{r}_{1/\lambda}K(\tau)d\tau}
+\int^{r}_{1/\lambda}(|G|+\la|K'|)d\tau
< 3
\de_0^{-1} \lambda
 \ .\label{0-eq-1.5}
\eneq
and so
$|\mu|< 3\de_{0}^{-1}\la$.

On the one hand, if we divide \eqref{0-eq-tuo} by $K$ and integrate from $1/\lambda$ to $r$, by  \eqref{0-eq-Gest}, \eqref{0-eq-1.3}-\eqref{0-eq-1.5} and recall that $K\in (\delta_{0}, 1/\delta_{0})$, we get
\begin{eqnarray*}
2\la \int^{r}_{1/\lambda}\mu d\tau
&\leq&-\int_{1/\lambda}^{r}\frac{G(\tau)}{ K}d\tau-\lambda\int_{1/\lambda}^{r}\frac{K'}{K}d\tau-\int_{1/\lambda}^{r}\frac{\mu'}{ K}d\tau\\
&\leq&2\delta_{0}^{-2}
\la
-\frac{\mu(r)}{ K(r)}+\frac{\mu(1/\lambda)}{ K(1/\lambda)}-\int^{r}_{1/\lambda} \mu \frac{K'}{K^{2}}d\tau\\
&\leq & (8\de_{0}^{-2}+3\de_{0}^{-4})
\la\ .
\end{eqnarray*}
On the other hand, 
by \eqref{0-eq-tuo}-\eqref{0-eq-1.3},
we have $2\lambda K+\mu
\ge \la K>
\de_{0}\la
$,
and
$$\mu=\frac{-G(r)-\lambda K'-\mu'}{2\lambda K+\mu},$$
then we have 
$$-\int^{r}_{1/\lambda}\mu(\tau)d\tau=\int^{r}_{1/\lambda}\frac{G(\tau)}{2\lambda K+\mu}d\tau+\int^{r}_{1/\lambda}\frac{\lambda K'}{2\lambda K+\mu}d\tau+\int^{r}_{1/\lambda}\frac{\mu'}{2\lambda K+\mu}d\tau\ .$$
Similar arguments yield
\begin{eqnarray*}
-\int^{r}_{1/\lambda}\mu(\tau)d\tau
&\le &
2\de_{0}^{-2}
+\left.\frac{\mu}{2\lambda K+\mu}\right|_{\tau=1/\lambda}^r
+\int^{r}_{1/\lambda}\frac{\mu(2\lambda K'+\mu')}{(2\lambda K+\mu)^{2}}d\tau
\\
&\le& 
8\de_{0}^{-2}+\int^{r}_{1/\lambda}\frac{\mu(\lambda K'+\mu'+G)}{(2\lambda K+\mu)^{2}}d\tau+\int^{r}_{1/\lambda}\frac{\mu(\lambda K'-G)}{(2\lambda K+\mu)^{2}}d\tau\\
&\le & 
8\de_{0}^{-2}+6\de_{0}^{-4}
\ ,
\end{eqnarray*}
where we have used the fact that $\mu(\lambda K'+\mu'+G)\leq 0$ due to \eqref{0-eq-tuo}.

In conclusion, we have proved that
$|\int^{r}_{1/\lambda}\mu(\tau)d\tau|\le 8\de_{0}^{-2}+6\de_{0}^{-4}$.
Thus,  we see that
\beeq
\label{0-eq-1.6}y=e^{\lambda\int^{r}_{1/\lambda}K(\tau)d\tau+\int_{1/\lambda}^{r}\mu(\tau)d\tau}\simeq  e^{\lambda\int^{r}_{1/\lambda}K(\tau)d\tau}\ ,\ r\ge\la^{-1}\ ,\eneq
which 
proves  
\eqref{0.1-eq-1.6} for the case $\ep=y_0=1$ and completes the proof.
\end{proof}

\subsection{Radial 
entire solutions to the elliptic equation $\Delta_{\gm_1}\phi=\lambda^{2}\phi$}
When $\gm=\gm_1$ which is 
spherical symmetric,
we will use the radial solutions as candidate.

In this case, we could determine the asymptotic behavior of radial 
entire solutions.
\begin{lem}
\label{elp-eq}
Let $n\geq 2$, $\gm=\gm_1$,
and $R_2\ge 1$ such that
$n-1-r K'/K\ge 0$ for any $r\ge R_2$ (which is ensured by \eqref{dl2} and \eqref{unelp}).
Then
  there exists $c_2>0$ such that for any $0<\la\le 1/R_2$, there is a radial solution of 
  $\De_{\gm_1} \Phi_\la(x)=\la^2\Phi_\la(x)$,
  such that
\beeq\label{eq-eigfct-est}
c_{2}  \< \la r\>^{-\frac{n-1}{2}}e^{\la \int^{r}_{0}K(\tau)d\tau}
<\Phi_\la(x) 
< c_2^{-1}\< \la r\>^{-\frac{n-1}{2}}e^{\la \int^{r}_{0}K(\tau)d\tau} \ .
\eneq
\end{lem}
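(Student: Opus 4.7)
The plan is to reduce the eigenvalue equation $\Delta_{\gm_1}\Phi_\la=\la^2\Phi_\la$ for radial $\Phi_\la$ to the form treated in Lemma \ref{thm-ode}, construct $\Phi_\la$ by ODE integration from $r=0$, and paste together a Gr\"onwall estimate on $[0,\ep/\la]$ with the asymptotic output of Lemma \ref{thm-ode} on $[\ep/\la,\infty)$.

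For a radial function one has $\Delta_{\gm_1}\Phi = \tfrac{1}{Kr^{n-1}}\pa_r(r^{n-1}K^{-1}\Phi')$, so the eigenvalue equation is equivalent to $\Phi'' + (\tfrac{n-1}{r}-\tfrac{K'}{K})\Phi' = \la^2 K^2 \Phi$. To eliminate both the first-order term coming from $r^{n-1}$ and the residual $K'/K$ coefficient, I substitute $w = K^{-1/2} r^{(n-1)/2}\Phi$. A direct computation yields
\[
w''-\la^2 K^2 w + G(r) w=0,\quad
G = \tfrac{K''}{2K} - \tfrac{3(K')^2}{4K^2} + \tfrac{(n-1)K'}{2rK} + \tfrac{(n-1)(3-n)}{4r^2}.
\]
By \eqref{dl2} and $K\in(\de_0,\de_0^{-1})$, each term of $G$ is pointwise bounded by an integrable quantity; more sharply, $\int_{\ep/\la}^\infty|G|\,dr \les \la/\ep + \la^{1+\rho}$, which is $\le\de_0^{-1}\la$ for $\ep$ fixed and $\la_0$ small. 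The $L^1$ bound on $K'$ follows from \eqref{dl2}, so the assumptions of Lemma \ref{thm-ode} on $K$ and $G$ hold.

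I then construct $\Phi_\la$ as the radial solution of $\Delta_{\gm_1}\Phi_\la=\la^2\Phi_\la$ with $\Phi_\la(0)=1$, $\Phi_\la'(0)=0$, by standard ODE theory at the regular singular point $r=0$ plus linear continuation. Integrating $(r^{n-1}K^{-1}\Phi_\la')'=\la^2 r^{n-1} K\Phi_\la$ from $0$ shows $\Phi_\la'>0$ whenever $\Phi_\la>0$, and linear uniqueness rules out $\Phi_\la$ reaching $0$, so $\Phi_\la,\Phi_\la'>0$ on $(0,\infty)$. The same identity combined with Gr\"onwall yields, uniformly in $\la\in(0,\la_0]$, the bounds $1\le\Phi_\la(r)\le e^{C\la^2 r^2/2}$ and $0<\Phi_\la'(r)\les \la^2 r\,\Phi_\la(r)$. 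Evaluating at $r_0:=\ep/\la$ (with $\ep=O(1)$ fixed) gives $\Phi_\la(r_0)\simeq 1$ and $\Phi_\la'(r_0)\les \ep\la$, whence
\[
w(r_0)\simeq (\ep/\la)^{(n-1)/2},\quad 0<w'(r_0)\les \la\left(\tfrac{n-1}{2\ep}+\ep\right)w(r_0),
\]
the two pieces coming respectively from $\pa_r r^{(n-1)/2}$ and from $\Phi_\la'$. Choosing $\ep$ appropriately (possibly after shrinking $\de_0$, which is harmless) ensures $w'(r_0)<\de_0^{-1}\la w(r_0)$, verifying the final hypothesis of Lemma \ref{thm-ode}.

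Applying Lemma \ref{thm-ode} on $[r_0,\infty)$ gives $w(r)\simeq w(r_0)\,e^{\la\int_{r_0}^r K(\tau)\,d\tau}$. Undoing the substitution, using $K^{\pm 1/2}\simeq 1$ and $\la\int_0^{r_0}K(\tau)\,d\tau=O(\ep)=O(1)$, I obtain $\Phi_\la(r)\simeq (\la r)^{-(n-1)/2}e^{\la\int_0^r K(\tau)\,d\tau}$ for $r\ge r_0$, matching \eqref{eq-eigfct-est} since $\langle\la r\rangle\simeq \la r$ there. For $r\le r_0$ the Gr\"onwall bound combined with $\la r\le\ep$ gives $\Phi_\la(r)\simeq 1\simeq \langle\la r\rangle^{-(n-1)/2}e^{\la\int_0^r K(\tau)\,d\tau}$, completing the two-sided bound \eqref{eq-eigfct-est}. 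The main technical hurdle I expect is the uniform-in-$\la$ verification of the strict inequality $w'(r_0)<\de_0^{-1}\la w(r_0)$; this requires a careful accounting of the two pieces of $w'(r_0)$ together with the explicit choice of $\ep$.
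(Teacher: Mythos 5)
Your argument is sound and, in the exterior region, is essentially the paper's: the same substitution $w=K^{-1/2}r^{(n-1)/2}\Phi_\la$, the same potential $G$ with the same $L^1$ bounds, and the same application of Lemma \ref{thm-ode}. Where you genuinely differ is the interior. The paper normalizes at the boundary: it solves the Dirichlet problem for $\Delta_{\gm_1}\Phi_\la=\la^2\Phi_\la$ in $B_{1/\la}$ with $\Phi_\la=1$ on $\pa B_{1/\la}$, obtains $0<\Phi_\la\le 1$ and $\pa_r\Phi_\la>0$ from the strong maximum principle and Hopf's lemma, and then must prove the uniform-in-$\la$ lower bound \eqref{uniform1} at the origin by a rescaling plus Arzel\`a--Ascoli compactness argument. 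You normalize at the center instead, shooting from $r=0$ with $\Phi_\la(0)=1$, $\Phi_\la'(0)=0$; monotonicity and the Gr\"onwall bound $1\le \Phi_\la\le e^{C\la^2 r^2/2}$ then give the uniform two-sided interior estimate directly, so the compactness step disappears. That is a genuine simplification; the only price is that you must justify solvability of the radial ODE through the singular point $r=0$, e.g.\ via the integral equation $\Phi(r)=1+\la^2\int_0^r K(s)s^{1-n}\int_0^s K(\tau)\tau^{n-1}\Phi(\tau)\,d\tau\,ds$ and a contraction/continuation argument, and then check that the resulting radial function is a (weak, hence classical) solution across the origin, whereas the paper gets existence from elliptic theory.

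Two details need patching. First, your expression for $w'(r_0)$ omits the contribution of $\pa_r K^{-1/2}$: in fact $w'=K^{-1/2}r^{(n-3)/2}\bigl(\tfrac{n-1}{2}-\tfrac{rK'}{2K}\bigr)\Phi_\la+K^{-1/2}r^{(n-1)/2}\Phi_\la'$. The extra piece is harmless in size, since $|rK'/K|\les r^{-\rho}$ by \eqref{dl2}, but its sign matters: the first part of Lemma \ref{thm-ode} requires $w,w'>0$ on all of $[r_0,\infty)$, not merely at $r=r_0$, and positivity of $w'$ is exactly where the hypothesis $n-1-rK'/K\ge 0$ for $r\ge R_2$ enters (this is the paper's \eqref{eq-y'}). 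Consequently you should take $\ep=1$, so that $r_0=1/\la\ge R_2$ for all $\la\le 1/R_2$ (a small $\ep$ would force the extra restriction $\la\le\ep/R_2$), and absorb the resulting constants in the hypothesis $w'(r_0)<\de_0^{-1}\la w(r_0)$ by shrinking $\de_0$, as you anticipated. With these adjustments your proof closes and yields \eqref{eq-eigfct-est} exactly as in the paper.
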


Actually, the solution will be constructed such that 
$\Phi_\la(\la^{-1}\omega)=1$, for any $\omega\in \Sp^{n-1}$.
To prove the estimates of $\Phi_\la$, we divide the proof into two parts, inside the ball $B_{\la^{-1}}$ and exterior to the ball.

\subsubsection{Inside the ball $B_{\la^{-1}}$.}
We first consider the Dirichlet problem within $B_{1/\lambda}$
\beeq
\label{Diri}
\begin{cases}
\Delta_{\gm_1}\Phi_\la=\lambda^{2}\Phi_\la, x\in B_{1/\lambda}\\
\Phi_\la|_{\pa B_{1/\lambda}}=1
\end{cases}
\eneq
Then by standard existence theorem of elliptic equation (see, e.g., Evans \cite{Evans10}), there is a unique (hence radial) solution $\Phi_\la\in H^{1}(\bar{B}_{1/\lambda})\cap C^{\infty}(\bar{B}_{1/\lambda})$. In addition, we have $0< \Phi_\la\leq 1$ in $B_{1/\lambda}$. In fact, if there exists $x_{0}\in B_{1/\lambda}$ such that $\Phi_\la(x_{0})\leq 0$, then by strong maximum principle (see, e.g., Evans \cite{Evans10}, page 350, Theorem 4), we get $\Phi_\la\equiv 1$ within $B_{1/\lambda}$, which is a contradiction. By Hopf's lemma (\cite{Evans10}, page 347), we have $\pa_{r}\Phi_\la>0$ for all $r>0$. 

For future reference, we need to obtain more information concerning the behavior of $\Phi_\la$.
At first, 
we  claim that we have the following derivative estimates
\beeq\label{eq-dr-est}
\pa_{r}\Phi_\la(r)\leq D_{0}\la^2 r \Phi_\la (r)\ ,
|\pa_r^2\Phi_\la(r)|\le D_0 \la^2\Phi_\la (r)\ , \forall r\ge 0
\eneq 
for some constant $D_0$ independent of $\lambda\in (0, 1]$.
 In addition,
we
 claim that there exists a uniform lower bound of
 $\Phi_\la$ for $\la \in (0, 1]$:
\beeq
\label{uniform1}
\Phi_\la(x)\geq \Phi_\la(0)\ge C>0, \forall \la\in (0, 1]\ .
\eneq
 
\subsubsection{Derivative estimates 
\eqref{eq-dr-est}}
As $\Phi_\la$ is radial, by \eqref{dl3} we have 
$$\Delta_{\gm_1}\Phi_\la=K^{-1}r^{1-n}\pa_{r}( K^{-1} r^{n-1}\pa_{r}\Phi_\la)\ ,$$
and so
\beeq
\label{s01}
\pa_{r}( K^{-1} r^{n-1}\pa_{r}\Phi_\la)=\lambda^{2}Kr^{n-1}\Phi_\la.
\eneq

As $\Phi_\la$ is increasing, we get
$$
K^{-1} r^{n-1}\pa_{r}\Phi_\la\le
\int_0^r\lambda^{2}\tau^{n-1}K\Phi_\la d\tau
\le \lambda^{2}r^{n}\|K\|_{L^\infty} \Phi_\la(r)\ ,$$
that is, $
\pa_{r}\Phi_{\lambda}
\le\|K\|^2_{L^\infty}\lambda^{2}r\Phi_\la(r)$.

For the second order derivative of $\Phi_{\lambda}$, by \eqref{s01}, we have 
$$|\pa^{2}_{r}\Phi_{\lambda}|
=|\lambda^{2}K^{2}\Phi_\la-\big(\frac{n-1}{r}-\frac{K'}{K}\big)\pa_{r}\Phi_\la|
\leq \lambda^{2}K^{2}\Phi_{\lambda}+\frac{C}{r}\pa_{r}\Phi_{\lambda} \les \lambda^{2}\Phi_{\lambda}$$
for some $C>0$ due to \eqref{dl2}.

\subsubsection{Uniform lower bound: \eqref{uniform1}}
To prove the uniform lower bound \eqref{uniform1}, we use scaling argument so that we could compare all these solutions in the same domain.

Let $f_{\lambda}(x)=\Phi_\la(x/\la)$, then $f_{\lambda}\in (0, 1]$ which satisfies
\begin{align*}
\begin{cases}
\Delta_{\tilde{\gm}_\la}f_{\lambda}=f_{\lambda}, x\in B_{1}\\
f_{\lambda}|_{\pa B_{1}}=1
\end{cases}
\end{align*}
where $\tilde{\gm}_\la(x)=\gm_1(x/\lambda)$. Since $f_{\lambda}$ is radial increasing, we know that
$$\liminf_{0<\lambda\leq 1}\inf_{x\in B_{1}}f_{\lambda}=\liminf_{0<\lambda\leq 1}f_{\lambda}(0):=C\ge 0\ .$$
To complete the proof, we need only to prove
$C>0$.

By definition, there exists a sequence $\lambda_{j}\to 0$ such that $f_{\lambda_{j}}(0)\to C$ as $j\to \infty$.
Recalling \eqref{eq-dr-est}, we know that
\beeq\label{eq-dr-est'}
0<f_\la(r)\le 1,\
\pa_{r}f_\la(r)
\leq D_{0} r f_\la (r)\ ,
|\pa_r^2f_\la(r)|\le D_0 f_\la (r)\ .\eneq
By the Arzela-Ascoli theorem, we see that
 there exists a subsequence of $\lambda_{j}$ (for simplicity we still denote the subsequence as $\lambda_{j}$) such that 
$f_{\lambda_{j}}$ converges uniformly to 
$f$ in $C^{1}(\bar B_{1})$ as $j$ goes to infinity.

In view of the equations satisfied by $f_{\la_j}$, we see that, for any $\phi\in C^\infty_0(B_1)$, we have
\beeq\label{eq-1027}\int_{B_1} (\sum_{k,l}\tilde{\gm}_{\la_j}^{kl}\pa_k f_{\la_j} \pa_l \phi +f_{\la_j} \phi) d V_{\gm_{\la_j}}=0\ .\eneq
By \eqref{dl3}-\eqref{dl2}, we see that $\tilde{\gm}_{\la_j}^{kl}(x)=
\gm_1^{kl}(x/\lambda_{j})\to \delta^{kl}$ as $\lambda_{j}\to 0$ when $x\neq 0$, and
$\tilde{\gm}_{\la_j}^{kl}\in L^\infty$ uniformly.
Then, we could take limit in
\eqref{eq-1027} to obtain
\beeq\label{eq-1027.1}\int_{B_1} (\nabla f \cdot \nabla \phi +f \phi) d x=0\ ,\eneq for any $\phi\in C^\infty_0(B_1)$, which tells us that $f\in C^1$ is a weak solution to the Poisson equation
\begin{align*}
\begin{cases}
\Delta f=f, x\in B_{1}\ ,\\
f|_{\pa B_{1}}=1\ ,
f(0)=C\ .
\end{cases}
\end{align*}
By regularity and
strong maximum principle, we know that $f\in C^\infty(B_1)$ and
$f(0)=C>0$, which completes the proof.

\subsubsection{The solution outside the ball $B_{1/\lambda}$.}
When $r\geq 1/\lambda$, by \eqref{s01}, the equation
$\Delta_{\gm_1}\Phi_\la=\lambda^{2}\Phi_\la$
is reduced to a second order ordinary differential equation
\beeq
\label{qidian}
\begin{cases}
\pa^{2}_{r}\Phi_\la+\big(\frac{n-1}{r}-\frac{K'}{K}\big)\pa_{r}\Phi_\la=\lambda^{2}K^{2}\Phi_\la,\\
\Phi_\la(1/\lambda)=1, \Phi_\la'(1/\lambda)\in (0, D_{0}\lambda].\\
\end{cases}
\eneq

Let $y=r^{\frac{n-1}{2}}K^{-\frac 1 2}\Phi_\la$, then, if
$\la\in (0, 1/R_2)$, we have
 $y>0$ and 
$$y'=\frac{1}{2}r^{\frac{n-3}{2}}K^{-1/2}\Big(n-1-r\frac{K'}{K}\Big)\Phi_\la+r^{\frac{n-1}{2}}K^{-1/2}\Phi_\la'
 \ .$$
Recalling that
we have \eqref{eq-dr-est} and the fact that $n-1-rK'/K\ge 0$ for $r\ge 1/\la\ge R_2$, due to the assumption \eqref{dl2} and \eqref{unelp},
we see that
\beeq\label{eq-y'} 
0< y'\les 
(1/r +\la^2 r) y, \forall r\ge \la^{-1}\ .
\eneq
 Moreover, it turns out that $y$ satisfies
\beeq
\label{1.2}
\begin{cases}
y''-\lambda^{2}K^{2}y+G(r)y=0,\\
y(1/\lambda)=y_{0}, y'(1/\lambda)=y_{1},
\end{cases}
\eneq
where $y_{0}=\lambda^{-\frac{n-1}{2}}K^{-\frac{1}{2}}(1/\lambda)\simeq \la^{-\frac{n-1}{2}}$,
$0<y_1\les \la y_0$, and 
$$
G(r)=-\frac{(n-1)(n-3)}{4r^{2}}+\frac{(n-1)K'}{2rK}+\frac{K''}{2K}-\frac{3}{4}\left(\frac{K'}{K}\right)^{2}\ .$$
By the assumption \eqref{dl2}, we see that
$r^{2}G(r)$ is uniformly bounded and so
\beeq\label{eq-Gest}
\|K'\|_{L^{1}([1/\lambda, \infty))}\les 1\ ,\ 
\|G(r)\|_{L^{1}([1/\lambda, \infty))}\le \la\|  r^{2}G(r)\|_{ L^{\infty}([1/\lambda, \infty))}\les \la\ .\eneq

For \eqref{1.2}, we could apply
Lemma \ref{thm-ode} with $\ep=1$ to conclude
$$y(r)
\simeq  y_0 e^{\lambda\int^{r}_{1/\la }K(\tau)d\tau}\simeq  \la^{-\frac{n-1}{2}} e^{\lambda\int^{r}_{1/\la } K(\tau)d\tau}
\simeq  \la^{-\frac{n-1}{2}} e^{\lambda\int^{r}_{0} K(\tau)d\tau}
 ,\ r\ge 1/\lambda\ .$$
Hence
for any $\la\in (0, R_2^{-1})$,
  we obtain
$$\Phi_\la=r^{-\frac{n-1}{2}}K^{1/2}y\simeq
(r\lambda)^{-\frac{n-1}{2}}e^{\lambda\int^{r}_{0}K(\tau)d\tau}
,\ \forall r\ge \la^{-1}.$$

Recalling \eqref{uniform1}, we have
$$\Phi_\la(r)\simeq 1, \ \forall r\le \la^{-1}, \la\in (0, R_2^{-1}).$$
Combining this two estimates, we 
get
\beeq
\Phi_\la
\simeq
\<r\lambda\>^{-\frac{n-1}{2}}e^{\lambda\int^{r}_{0}K(\tau)d\tau}
,\ \forall 
\la\in (0, R_2^{-1})
,\eneq
which
completes the proof of \eqref{eq-eigfct-est}.

\subsection{Proof of Lemma \ref{elp}}
With help of Lemma \ref{elp-eq},
 due to the exponential feature of the short range perturbation $\gm_2$, a perturbation argument will give the proof of  Lemma \ref{elp}.
Here, we basically follow the similar proof as in Wakasa-Yordanov \cite[Lemma 2.2]{WaYo18-1pub}.

Let $\psi=\phi_\la-\Phi_\la$ where $\Delta_{\gm_{1}}\Phi_\la=\lambda^{2}\Phi_\la$. 
Then with Lemma \ref{elp-eq} in hand, we are reduced to prove existence of $\psi$ and show smallness of $\|\psi\|_{L^{\infty}}$. In fact,  $\psi$ satisfies
\begin{align}
\label{est-yx}
\Delta_{\gm}\psi=\Delta_{\gm}(\phi_\la-\Phi_\la)=\lambda^{2}\phi_\la-\lambda^{2}\Phi_\la+(\Delta_{\gm_{1}}-\Delta_{\gm})\Phi_\la=\lambda^{2}\psi+W\Phi_\la.
\end{align}
We claim that, for any $\la\in (0, \min(\al/2, 1/R_2))$,
\beeq
\label{rem1}
\|W\Phi_\la\|_{ L^{q}}\les \la^2,\ \forall \ q\geq 1\ .
\eneq
Actually, we have
$$W\Phi_\la=
(\gm^{jk}_{1}-\gm^{jk})
\pa_{j}\pa_{k}\Phi_\la+
[g_1^{-1/2}\pa_j (g_1^{1/2}\gm_1^{jk})
-g^{-1/2}\pa_j (g^{1/2}\gm^{jk})]
 \pa_k\Phi_\la\ .
$$
By  
\eqref{eq-eigfct-est}, \eqref{eq-dr-est}
 and \eqref{dlfjia} 
 with $\lambda<\min(\al/2, 1/R_2)$, it is easy to see that
$$|W \Phi_\la|\les \sum_{j,k}
\sum_{1\le |\be|\leq 2}
|\nabla^{\leq 1}g^{jk}_{2}||\nabla^{\be}\Phi_\la|\les
\la^2\< r\> e^{(\la-\al)\int^{r}_{0}K(\tau)d\tau}
\les
\la^2 \< r\>e^{-\frac{\al}{2}\de_0 r}
\ ,$$
which gives us \eqref{rem1}.
  
Standard elliptic theory ensures that there exists a unique weak $H^1$ solution $\psi$
to \eqref{est-yx}.
  To show the smallness of $\| \psi\|_{L^{\infty}}$, we first take the (natural) inner product of \eqref{est-yx} with $\psi$ to get
$$-\langle \Delta_{\gm}\psi, \psi\rangle_\gm+\lambda^{2}\langle \psi, \psi\rangle_\gm=-\langle W\Phi_\la,  \psi\rangle_\gm\ .$$
Thus by the Cauchy-Schwarz inequality and uniform elliptic condition \eqref{unelp} we have 
$$\delta_{0}\|\nabla \psi\|^2_{L^{2}_{\gm}}+\lambda^{2}\|\psi\|^2_{L^{2}_{\gm}}\les \lambda^{2}\|\psi\|_{L^{2}_{\gm}},$$
which yields
$$\|\psi\|_{L^{2}_x}\les 1, \ \|\nabla \psi\|_{L^{2}_{x}}\les \lambda\ .$$
By applying Gagliardo-Nirenberg inequality, we get 
$$\|\psi\|_{L^{q_0}_{x}}\leq C\lambda^{\theta},$$
for some fixed $q_0>2$ with $\theta=n(1/2-1/q_0)\in (0, 1)$. Thus by
local properties of weak solutions for uniformly elliptic equations, see
Gilbarg-Trudinger
\cite[Theorem 8.17]{MR1814364}, we know that
there exist a uniform constant $C>0$ (depending only on $\gm$, $q_0$, $n$), such that for any $x\in \R^n$,
we have
$$
\|\psi\|_{L^{\infty}}=
\|\psi\|_{L^\infty_x( L^{\infty}(B_1(x)))}\le C(\|\psi\|_{L^\infty_x (L^{q_0}(B_2(x)))}
+\|W\Phi_\la\|_{L^{n}})\les \la^\theta\ ,
$$
which completes the proof.

\section{Blow up in subcritical case}\label{sec-sub}
In the subcritical case, $p<p_c(n)$, 
as has been classical in this field, we could use the test function method to give a proof of 
Theorem \ref{thm-1-AE}, as soon as we 
obtained the solution of \eqref{1.1} with the desired property \eqref{1.50}, for some fixed $\la>0$.

In the presence of damping lower order term, with $b(t)\in L^1$, we propose the approach of change of variable for Theorem \ref{thm-2-AE-damp}, which essentially reduced the problem to the standard problem. This simplifies the corresponding proof in Lai-Takamura\cite{LaiTa18} for the subcritical case with $b\ge 0$ and $\gm=\gm_0$. 

As is standard, when we employ the test function method, we typically need  the Kato type lemma to conclude nonexistence of global solutions as well as the upper bound of the lifespan, for which proof, we refer Sideris \cite{Sideris84} for the blow up result and Zhou-Han \cite{ZhouHan11} for the upper bound. 
\begin{lem}[Kato type lemma]
\label{lem4.1}
Let $\be>1$, $a\geq 1$ and $(\be-1)a>\al -2$. If $F\in C^{2}([0, T))$ satisfies
$$F(t)\geq \delta (t+1)^{a},\ F''(t)\geq k(t+1)^{-\al}F^{\be}\ ,$$
for some positive constants $\delta, k$. Then $F(t)$ will blow up at finite time, that is, $T<\infty$. Moreover, we have the following upper bound of $T$,
$$T\leq c\delta^{-\frac{\be-1}{(\be-1)a-\al+2}},$$
for some constant $c$ which is independent of $\delta$. 
\end{lem}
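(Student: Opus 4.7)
The plan is to carry out the classical iteration (bootstrap) argument in the spirit of Sideris \cite{Sideris84} and Zhou-Han \cite{ZhouHan11}. First I would record two preliminary facts: since $F'' \ge 0$ the derivative $F'$ is nondecreasing, and since $F(t) \ge \delta(t+1)^a \to \infty$ we may fix an absolute time $t_0 \ge 0$ beyond which $F'(t_0) \ge 0$, so that double integration from $t_0$ preserves positivity.

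The heart of the argument is to produce inductively a sequence of improved lower bounds
\begin{equation}
F(t) \ge A_j\,(t+1)^{a_j}, \qquad t \ge t_0,
\label{eq:iter}
\end{equation}
with initial data $a_0 = a$ and $A_0 \gtrsim \delta$. Plugging \eqref{eq:iter} into the hypothesis and integrating twice from $t_0$ yields \eqref{eq:iter} at level $j+1$ with
$$a_{j+1} = \beta a_j - \alpha + 2, \qquad A_{j+1} \ge \frac{k\,A_j^{\beta}}{(a_{j}\beta - \alpha + 1)(a_{j}\beta - \alpha + 2)}.$$
Next I would analyze these two recursions. The affine map $a \mapsto \beta a - \alpha + 2$ has fixed point $a^{\ast} := (\alpha - 2)/(\beta - 1)$, and the hypothesis $(\beta - 1)a > \alpha - 2$ is exactly the statement $a_0 > a^{\ast}$. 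Iterating gives the explicit formula
$$a_j - a^{\ast} = \beta^{j}(a - a^{\ast}), \qquad a - a^{\ast} = \frac{(\beta-1)a - \alpha + 2}{\beta - 1} > 0,$$
so $a_j$ grows geometrically like $\beta^{j}$. Taking logarithms of the recursion for $A_j$, dividing by $\beta^{j+1}$, and telescoping, one obtains
$$\beta^{-j}\log A_j \ge \log \delta - C_0$$
for some constant $C_0$ independent of $j$ and $\delta$, since $\sum_{j\ge 0}\beta^{-j}\log(a_j+1) < \infty$.

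Combining the two estimates, for $t \ge t_0$,
$$\log F(t) \ge \log A_j + a_j \log(t+1) \ge \beta^{j}\bigl(\log\delta - C_0 + (a - a^{\ast})\log(t+1)\bigr) + a^{\ast}\log(t+1).$$
Whenever the parenthesized quantity is strictly positive, i.e., whenever $(t+1)^{a - a^{\ast}} > e^{C_0}\delta^{-1}$, sending $j \to \infty$ forces $F(t) = +\infty$, contradicting $t < T$. Hence
$$T \le c\,\delta^{-1/(a - a^{\ast})} = c\,\delta^{-(\beta-1)/((\beta-1)a - \alpha + 2)},$$
as claimed. The main obstacle is the bookkeeping of multiplicative constants through infinitely many iterations; the decisive point that makes the final $\delta$-exponent sharp is precisely the convergence of $\sum\beta^{-j}\log(a_j+1)$, which ensures that the telescoped $\log A_j$ loses only a $\delta$-independent additive constant.
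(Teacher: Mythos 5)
The paper itself does not prove Lemma \ref{lem4.1} (it is quoted with references to Sideris \cite{Sideris84} and Zhou--Han \cite{ZhouHan11}), so the comparison is with the standard argument behind those citations; your skeleton --- the recursion $a_{j+1}=\beta a_j-\alpha+2$ with fixed point $a^{\ast}=(\alpha-2)/(\beta-1)$, the companion recursion for $A_j$, the $\beta^{-j}$-telescoping, and the final exponent arithmetic giving $\delta^{-(\beta-1)/((\beta-1)a-\alpha+2)}$ --- is exactly that classical iteration and the arithmetic is right.

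There is, however, a genuine gap at your first step: the claim that ``since $F(t)\ge\delta(t+1)^a\to\infty$ we may fix an \emph{absolute} time $t_0$ with $F'(t_0)\ge 0$.'' The lemma imposes no condition on $F(0)$ or $F'(0)$, and $T$ may be finite (that is what you are trying to prove), so such a $t_0$ need not exist in $[0,T)$ at all, and when a sign change does occur its location depends on $F$ itself (roughly $t_*\lesssim (F(0)/\delta)^{1/a}$), not only on $\delta,a,\alpha,\beta,k$. Since your iteration starts at $t_0$ and your thresholds inherit it, the final constant would then depend on the solution, and the case ``$F'<0$ on all of $[0,T)$'' is simply not covered; so the stated conclusion, with $c$ independent of $\delta$ and of $F$, does not follow as written. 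The step can be repaired: on a maximal interval $[0,t_*]$ where $F'\le 0$, the decreasing function $F$ must stay above the terminal value of the increasing barrier, so $F\ge \delta(t_*+1)^a$ there; reversing time yields $y''\ge k(t_*+1)^{-\alpha}y^{\beta}$ with $y\ge\delta(t_*+1)^a$ and $y'\ge 0$, and the elementary blow-up bound for this differential inequality forces $t_*\le C\,\delta^{-(\beta-1)/((\beta-1)a-\alpha+2)}$, after which your iteration from $t_*$ gives the claim. (Alternatively, add the hypotheses $F(0),F'(0)\ge 0$, which hold in the paper's application $F(t)=\int u\,dv_{\gm}$, and take $t_0=0$.) A second, more routine issue: double integration from $t_0$ produces lower bounds in powers of $(t-t_0)$, not bounds $A_{j+1}(t+1)^{a_{j+1}}$ valid on all of $[t_0,\infty)$ as you assert; one must run the induction on a bounded increasing sequence of thresholds (the slicing device of \cite{MR1785116}, e.g. $T_j=(2-2^{-j})(t_0+1)$) and check that the extra losses, of size $\exp(-Ca_j2^{-j})$, remain summable after division by $\beta^{j}$. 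This is part of the ``bookkeeping'' you defer, and it does work out, but it --- together with the control of $t_0$ above --- is where the uniform region of validity and the $\delta$-independent constant actually come from.
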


In the following, we give a unified proof of Theorems \ref{thm-1-AE},
\ref{thm-2-AE-damp}, in the subcritical case.

\subsection{Change of variable}\label{sec-change}
Considering \eqref{2.1'} with $b(t)\in L^{1}$, we introduce the new variable
\beeq
 \label{bianhuan}
 m(t)=e^{\int^{t}_{0}b(\tau) d\tau}, \quad s=\int^{t}_{0}\frac{1}{m(\tau)}d\tau=h(t)\ .
 \eneq
Then it is easy to see $\pa_{s}=m(t)\pa_{t}$ and
 $$(\pa^{2}_{t}+b(t)\pa_{t})u=m^{-1}\pa_{t}(m\pa_{t}u)=m^{-2}m\pa_{t}(m\pa_{t})=m^{-2}(m\pa_{t})^{2}u=m^{-2}(t)\pa^{2}_{s}u.$$
Let $t=\eta(s)$ denote the inverse function of $h$. 
Thus \eqref{2.1'} becomes
\beeq
\label{eq-2.1}
\pa^{2}_{s}u-\tilde{m}^{2}(s)\Delta_{\gm}u=\tilde{m}^{2}(s)|u|^{p}, \quad \tilde{m}(s)=m(\eta(s))\ .
\eneq
Traditionally, we still use $t$ to denote time, thus we are reduced to consider the following equation
\beeq
\label{bianhuan1}
\pa^{2}_{t}u-\tilde{m}^{2}(t)\Delta_{\gm}u=\tilde{m}^{2}(t)|u|^{p}, 
\ \tilde{m}(t)=m(\eta(t))\ ,
\eneq
with initial data
$$u(0,x)=\ep u_0(x), u_{t}(0,x)=\ep u_1(x)\ .$$
Thus by \eqref{fspyl}, the support of solution $u$ of \eqref{bianhuan1} satisfies
\beeq
\label{supp01}
\supp u\subset \{(t, x); \int^{|x|}_{0}K(\tau)d\tau\leq \eta(t)+R_{1}\}\ .
\eneq
Furthermore, there exists $\delta_{1}\in (0, 1)$ such that $\tilde{m}(t)\in [\delta_{1}, 1/\delta_{1}]$ since $b(t)\in L^{1}$.

For future reference, we record that $\eta'(t)=1/h'(\eta(t))=m(\eta(t))\in [\delta_{1}, 1/\delta_{1}]$, $\eta(0)=0$, $m'=bm$,  $m(0)=1$,
$\tilde m'(t)=m'(\eta(t))\eta'(t)=b(\eta(t))m^2(\eta(t))=b(\eta(t))\tilde m^2(t)$, that is
\beeq\label{eq-change}
\eta'(t)=m(\eta(t))=\tilde m(t),\ 
\tilde m'(t)=b(\eta(t))\tilde m^2(t),\ 
\eta(0)=0, m(0)=\tilde m(0)=1\ .
\eneq

\subsection{Proof of the subcritical blow up}
Recall that
for the standard wave operator $\pt^2-\Delta_g$, the test function is typically
chosen to be $\psi(t, x)=e^{-\la t}\phi_\la (x)$, which solves linear homogeneous wave equation. In the presence of $\tilde m$, we try choosing similar form of test function, which may not be the exact solution to 
 the linear homogeneous wave equation.
 
Let $\psi(t, x)=e^{-\la_1\eta(t)}\phi(x)$ be the test function, 
 where $\phi=\phi_{\la_1}$ is the solution of \eqref{1.1} with $\lambda=\la_1\in (0, \min(1,\la_0))$.

For given nontrivial data $(u_0, u_1)$ with  \eqref{hs2} and sufficiently small
 $\ep>0$, we know from Lemma \ref{thm-lwp} that there is a local (weak) solution $u$ satisfying \eqref{supp01} for
 \eqref{bianhuan1}. By continuity, there exists a maximal time of existence $[0, T_\ep)$. 
Without loss of generality, we assume $T_\ep>2$. 
 
Let $F(t)=\int u(t,x)dv_{\gm}$, $H(t)=\int u\psi dv_{\gm}$. Then
we claim that
\beeq 
\label{jian1}
F''\gtrsim |F|^{p}(1+t)^{-n(p-1)}, 
F''\gtrsim |H|^{p}(1+t)^{(n-1)(1-p/2)}, \forall t\in [0, T_\ep)\ ,
\eneq
\beeq\label{eq-lb2}
 H\gtrsim \ep\ , \forall t\in [1, T_\ep)\ .
\eneq
 We postpone the proof to the end of this section.

Since $F(0)=\int_{\R^{n}} u_0d v_{\gm}\ge 0$, $F'(0)=\int_{\R^{n}}u_1 d v_{\gm}\geq 0$, then by \eqref{jian1}, we see that $F''(t)\geq 0$ for any $t\geq 0$, thus $F'(t)\geq 0$ and $F(t)\geq 0$ for any $t\geq 0$.
Then,
with help of 
\eqref{eq-lb2}, 
as $(n-1)(1-p/2)>-1$,
we could integrate the 
second  inequality in \eqref{jian1}
 twice to get
\beeq\label{eq-lb} F\gtrsim \ep^{p}(1+t)^{2+(n-1)(1-p/2)},\ \forall t\in [2, T_\ep)\ .\eneq
Heuristically, 
the lower bound \eqref{eq-lb}
 could be improved with help of the first inequality  in \eqref{jian1}, if
$$(2+(n-1)(1-p/2))p-n(p-1)+2>(2+(n-1)(1-p/2))\ ,$$
that is $p<p_c(n)$, which suggests blow up results.
Actually, for $1<p<p_c(n)$, we could apply
Lemma \ref{lem4.1} with
\eqref{jian1} and 
\eqref{eq-lb} for $t\ge 2$,
to obtain the blow up results  and 
the upper bound of lifespan $T_\ep$ as in 
\eqref{eq-life}.

Finally, we give the proof of \eqref{eq-life-special},
under the additional assumption that
$u_1$ does not vanish identically.
Actually, by the assumption on the data, we know that
$F(0)=\ep\int u_0 d v_{\gm}\ge 0$,
 $F'(0)=\ep\int u_1 d v_{\gm}\ge c\ep$ for some $c>0$.
Recall that the first inequality in \eqref{jian1} tells us that
$F''\geq 0$,
 we have 
\beeq\label{eq-lb3} F(t)\geq c\ep t,\ \forall t\ge 0\ ,\eneq
which improves the lower bound of $F$ in size, but not on rate, than that in \eqref{eq-lb}. Applying 
 Lemma \ref{lem4.1}
with 
\eqref{jian1} and 
\eqref{eq-lb3} for $t\ge 1$,
when $p\in (1, 1+2/(n-1))$,
we obtain 
$$T_\ep\le C_0 \ep^{-\frac{(p-1)}{2-(n-1)(p-1)}}\ ,
$$
 which is better than
that in \eqref{eq-life}, exactly when $n=2$ and $1<p<2$. 
This completes the proof of
\eqref{eq-life-special}.

\subsection{Proof of  \eqref{jian1}-\eqref{eq-lb2}}
\subsubsection{First inequality of  \eqref{jian1}}
Let $u$ be solution to
 \eqref{bianhuan1}  satisfying \eqref{supp01}, we have
$$\int_{\R^{n}}\pa^{2}_{t}u \ d v_{\gm}-\tilde{m}^{2}(t)\int_{\R^{n}}\sqrt{|\gm|}^{-1}\pa_{i}(g^{ij}\sqrt{|\gm|}\pa_{j}u)d v_{\gm}=\tilde{m}^{2}(t)\int_{\R^{n}}|u|^{p}d v_{\gm}\ .$$
As $F(t)=\int u(t,x)d v_{\gm}$ and $u$ vanish for large $x$, 
the second term vanishes and so
\beeq
\label{app10}
F''=\tilde{m}^{2}(t)\int_{\R^{n}}|u|^{p}d v_{\gm}.
\eneq

Recall that $\eta'(t)=1/h'(\eta(t))=m(\eta(t))\in [\delta_{1}, 1/\delta_{1}]$ and $\eta(0)=0$, we have 
$$\eta(t)=\int^{t}_{0}\eta'(s)ds\leq t/\delta_{1}\ .$$
Then by \eqref{supp01} and $K\in[\delta_{0}, 1/\delta_{0}]$, we see that $$\supp u\subset\{(t, x); \int^{|x|}_{0}K(\tau)d\tau\leq \eta(t)+R_{1}\}\subset\{(t, x); |x|\leq t/(\delta_{0}\delta_{1})+R_{1}/\delta_{0}\}\ .$$ 
By H\"{o}lder's inequality we get 
$$|F|\leq \int\limits_{|x|\leq \frac{t}{\delta_{0}\delta_{1}}+\frac{R_{1}}{\delta_{0}}}|u|d v_{\gm}\les \Big(\int |u|^{p}d v_{\gm}\Big)^{1/p}\Big( \frac{t}{\delta_{0}\delta_{1}}+\frac{R_{1}}{\delta_{0}}\Big)^{n/p'}.$$
Recalling \eqref{app10},
we have
$$F''=\tilde{m}^{2}(t)\int|u|^{p}d v_{\gm}\gtrsim |F|^{p}\langle t\rangle^{-n(p-1)},$$
which gives us the first inequality in
\eqref{jian1}.

\subsubsection{Second inequality of  \eqref{jian1}}
To relate $H$ with $F''$, we
use \eqref{supp01}  and H\"{o}lder's inequality to obtain
$$
|H(t)|=|\int u(t) \psi(t) d v_{\gm}|\les \Big(\int |u(t)|^{p} d v_{\gm}\Big)^{1/p}\Big(\int_{A_1}\psi(t, x)^{p'} dx\Big)^{1/p'}\ ,$$
where
$A_1:=\{x\in \R^n: \int^{|x|}_{0}K(\tau)d\tau\leq \eta(t)+R_{1}\}$.
To control the last term, we use \eqref{1.50} with $\lambda=\la_1$ and change of variable to get
\begin{eqnarray*}
\int_{A_1}\psi(t, x)^{p'} dx
&\les &\int_{A_1} \<\la_1 x\>^{-\frac{(n-1)p'}{2}}e^{p'\la_1(\int^{|x|}_{0}K(\tau)d\tau-\eta(t))}dx\\
&\les& \int_{0}^{\la_1(\eta(t)+R_{1})}e^{p'(r-\la_1\eta(t))}(1+r)^{n-1-\frac{(n-1)p'}{2}}dr\\
&\les& (1+\la_1\eta(t))^{n-1-(n-1)p'/2}
\les \<t\>^{n-1-(n-1)p'/2}
\ .
\end{eqnarray*}
Then by \eqref{app10}, we get
$$F''
\gtrsim \int |u|^{p} d v_{\gm} \gtrsim |H|^{p}
\<t\>^{-\frac{p}{p'}\big(n-1-(n-1)p'/2\big)} \gtrsim |H|^{p}
(1+t)^{(n-1)(1-p/2)}\ ,$$
which is the second inequality in
\eqref{jian1}.

\subsubsection{Proof of \eqref{eq-lb2}}
Let $G(t)=
 \int u(t) \phi(x)dv_\gm
 =e^{\la_1\eta(t)}H$.
 With $\phi(x)=\phi_{\la_1}(x)$ as the test function, 
we obtain by integration that
$$ G''-\tilde{m}^{2}(t)\la_1^2 G=\int \tilde{m}^{2}(t)|u|^{p}\phi dv_\gm\ge 0 \ . $$
Recall that
$\int^{t}_{0}\tilde{m}d\tau=\eta(t)$,
$$G(0)=\ep\int u_0 \phi d v_{\gm}, \ G'(0)=\ep\int u_1\phi d v_{\gm}\ .$$
We claim that
\beeq\label{eq-Lev}G(t)\geq C'\ep e^{\la_1\eta(t)}, t\geq 1\ ,
\eneq
for some $C'>0$, which gives us the desired
lower bound of $H(t)$ when $t\geq 1$, and so 
is \eqref{jian1}.

It remains to prove \eqref{eq-Lev}.  
By the condition \eqref{hs2} on the data, we have  
$$G(0)= C_{0}\ep, \ G'(0)= C_{1} \ep,$$
for some $C_{0}, C_{1}\ge 0$ with $C_{0}+C_{1}>0$. It is clear that $G(t)\ge C_{0}\ep$, $G''\ge \de_1^2\la_1^2 C_{0}\ep$ for all $t\ge 0$,
and $G'
\ge (C_{1}+t\de_1^2\la_1^2 C_{0})\ep
>0$ for all $t> 0$. Moreover, it is bounded from below by
$\ep y$, which is solution to
\beeq\label{eq-Lev2}y''-\tilde{m}^{2}(t)\la_1^2 y=0,
y(0)=G(0)/\ep=C_{0}, y'(0)= G'(0)/\ep=C_{1}\ .\eneq
For \eqref{eq-Lev2}, we could apply
Lemma \ref{thm-ode} together with elementary ODE to give the proof. Here, as it has a fixed parameter $\la_1$, we present another proof by 
 applying the classical Levinson theorem (see, e.g., 
\cite[Chapter 3, Theorem 8.1]{CoLe55}).
 As is clear, for the proof of subcritical results, we could avoid
 Lemma \ref{thm-ode}, but relying on the Levinson theorem.

For $y$, it is clear from continuity that we have
$y,  y'> 0$ for all $t> 0$.
More precisely, we have
$y(t)\ge C_{0}$, $y''\ge \de_1^2\la_1^2 C_{0}$ for all $t\ge 0$,
and so \beeq\label{eq-lb4}
y'\ge C_{1}+t\de_1^2\la_1^2 C_{0}
>0\ ,\ y\ge C_0+C_1 t>0,\ \forall t> 0\ .\eneq
Let $Y(t)=(Y_{1}(t), Y_{2}(t))^{T}$ with $Y_{1}=y$, $Y_{2}=y'$, then we have 
$$Y'=(A+V(t))Y,
A=\left(\begin{array}{cc}0 & 1 \\ \lambda_1^{2}k^2 & 0\end{array}\right),
V(t)=\left(\begin{array}{cc}0 & 0 \\ \lambda_1^{2}(\tilde{m}^{2}(t)-k^2) & 0\end{array}\right),$$
where
$k=\tilde{m}(\infty)=\exp(\int_0^\infty b(t)dt)$.
Noticing that $V'\in L^1$ and $\lim_{t\to\infty} V(t)=0$, we could apply the Levinson theorem to the system. Then there exists $t_0\in [0,\infty)$ so that we have two independent solutions, which have the asymptotic form
as $t\to \infty$
$$Y_{\pm}(t)=
\left(\begin{array}{c}1+o(1) \\ 
\pm\lambda_1 k+o(1)\end{array}\right)
e^{\pm\lambda_1\int^{t}_{t_0}\tilde{m}(\tau)d\tau}
$$
Thus, we have, for some $c_1, c_2$,
\beeq
y=c_1(1+o(1))
e^{\lambda_1\int^{t}_{t_0}\tilde{m}(\tau)d\tau}
+c_2(1+o(1))
e^{-\lambda_1\int^{t}_{t_0}\tilde{m}(\tau)d\tau}\eneq
as $t\to\infty$.
By \eqref{eq-lb4}, we know that
$y\ge C_{0}+C_{1}$
 for all $t\ge 1$, it is clear that $c_1>0$.
Then there exists some $T\ge 1$ such that
\beeq
y\simeq  c_1 
e^{\lambda_1\int^{t}_{t_0}\tilde{m}(\tau)d\tau}
\simeq  c_1 e^{\lambda_1 \eta(t)}
\ ,\ \forall t\ge T\ . \eneq
Combining it with
\eqref{eq-lb4}, we see that there exists $C'>0$ such that
$$y(t)\ge C'
e^{\lambda_1 \eta(t)}
\ , \forall t\ge 1\ .
$$
In conclusion, 
we obtain
$$G\ge \ep y\ge C'\ep e^{\lambda_1 \eta(t)}\ , \forall t\ge 1\ .$$
which completes the proof of
 \eqref{eq-Lev}.

\section{The Blow up of semilinear wave equations in critical case}
\label{sec-crit}
In this section, 
we give the proof of the critical case for 
Theorems \ref{thm-1-AE} and \ref{thm-2-AE-damp}.

With help of the solutions of \eqref{1.1} with the desired property \eqref{1.50}, for any $\la\in (0,\la_0]$, 
the proof of Theorem \ref{thm-1-AE} in the critical case could be given by following the similar proof as 
that in \cite{WaYo18-1pub}, for the case $b=0$ and $\gm=\gm_0+\gm_2$.

Similar to Section \ref{sec-sub},
we will use the approach of change of variable,
as presented in subsection \ref{sec-change}, for Theorem \ref{thm-2-AE-damp},
when there is a lower order term $b(t)\in L^1$.

\subsection{Preparation}
To conduct the test function method, when $b\neq 0$, we need to solve and determine asymptotic behavior for
certain second order differential equation with parameter $\la\in (0,\la_0]$,
\beeq \label{ode1}
y''-\lambda^{2}\tilde{m}^{2}(t)y=0, y(0)=0, y'(0)=1
\ .\eneq
The following lemma gives us the desired result.
\begin{lem}
\label{Le3.1}
There exists a uniform constant $c_3>0$, which is independent of $\la\in (0,\la_0]$, such that
the solution to the ordinary differential equation \eqref{ode1}, with parameter
 $\la\in (0,\la_0]$,
satisfies
\beeq
\label{eq-lower}
 y\ge c_3 \frac{\sinh \lambda\eta(t)}{\lambda}, \  y'\ge c_3 \cosh \lambda\eta(t)\ .
\eneq
\end{lem}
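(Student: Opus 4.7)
The plan is to transform \eqref{ode1} via the change of variable $\tau=\eta(t)$ from Subsection~\ref{sec-change}, and then analyze the resulting equation through a Pr\"ufer-type substitution that sharply captures the exponential rate $\lambda$. Setting $\tilde y(\tau)=y(\eta^{-1}(\tau))$ and using \eqref{eq-change}, the equation becomes $(m\tilde y')'=\lambda^2 m\tilde y$ with $\tilde y(0)=0$, $\tilde y'(0)=1$, where $m(\tau)=\exp\bigl(\int_0^\tau b(s)\,ds\bigr)\in [\delta_1,1/\delta_1]$. Since $y(t)=\tilde y(\eta(t))$ and $y'(t)=\tilde m(t)\tilde y'(\eta(t))$, the lemma reduces to the two uniform bounds $\tilde y(\tau)\geq c'\sinh(\lambda\tau)/\lambda$ and $\tilde y'(\tau)\geq c''\cosh(\lambda\tau)$, after which the conclusion follows with $c_3=\min(c',\delta_1 c'')$.

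A brief continuity argument using $y''=\lambda^2\tilde m^2 y\geq 0$ whenever $y\geq 0$ and $y'(0)=1>0$ shows $y,y'>0$ (hence $\tilde y,\tilde y'>0$) on $(0,\infty)$. Now set $u=\tilde y'-\lambda\tilde y$, $v=\tilde y'+\lambda\tilde y$, and their weighted counterparts $U=m^{1/2}u$, $V=m^{1/2}v$. A direct computation yields the linear system
\[
U'=-\lambda U-\tfrac{b}{2}V,\qquad V'=\lambda V-\tfrac{b}{2}U,
\]
with $U(0)=V(0)=1$. Then $(V^2-U^2)'=2\lambda(V^2+U^2)\geq 0$, so $V^2\geq U^2$ throughout; uniqueness for the linear system precludes $V$ from vanishing, so $V>0$ and $|U|\leq V$. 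Feeding $|U|\leq V$ back into the equation for $V$ yields $V'\geq(\lambda-|b|/2)V$, which integrates to the key pointwise bound $V(\tau)\geq\exp(\lambda\tau-\tfrac12\|b\|_{L^1})$, uniform in $\lambda\in(0,\lambda_0]$.

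To recover $\tilde y$, combine the algebraic identity $4\lambda m\,\tilde y\tilde y'=V^2-U^2$ with the integrated relation $V^2-U^2=2\lambda\int_0^\tau(V^2+U^2)\,ds\geq 2\lambda\int_0^\tau V^2\,ds$, giving $(\tilde y^2)'=2\tilde y\tilde y'\geq m(\tau)^{-1}\int_0^\tau V^2\,ds\gtrsim(e^{2\lambda\tau}-1)/\lambda$. One further integration in $\tau$, followed by the elementary identity $e^{2x}-1-2x=2\sinh^2 x+(\sinh(2x)-2x)\geq 2\sinh^2 x$ for $x\geq 0$, produces the uniform bound $\tilde y(\tau)\geq c'\sinh(\lambda\tau)/\lambda$. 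Inserting this into $m\tilde y'(\tau)=1+\lambda^2\int_0^\tau m\tilde y\,ds$ and using $\int_0^\tau\sinh(\lambda s)\,ds=(\cosh(\lambda\tau)-1)/\lambda$ then yields $\tilde y'(\tau)\geq c''\cosh(\lambda\tau)$, completing the reduction.

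The main obstacle is the \emph{uniformity} of $c_3$ across $\lambda\in(0,\lambda_0]$. A naive comparison replacing $\tilde m$ by $\delta_1$ in \eqref{ode1} yields only $y\geq\sinh(\lambda\delta_1 t)/(\lambda\delta_1)$, whose ratio to $\sinh(\lambda\eta(t))/\lambda$ decays exponentially as $\lambda\eta(t)\to\infty$; such crude comparisons lose the correct exponential rate. The $(U,V)$-substitution is essential because it extracts the sharp rate $\lambda$ (rather than $\delta_1\lambda$), and the identity $e^{2x}-1-2x\geq 2\sinh^2 x$ is precisely what transfers this sharp rate into a $\sinh(\lambda\tau)/\lambda$ bound valid uniformly both near $\tau=0$ (where $\tilde y\sim\tau$) and for large $\lambda\tau$ (where $\tilde y\sim e^{\lambda\tau}/(2\lambda)$).
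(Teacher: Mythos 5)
Your proposal is correct, and it takes a genuinely different route from the paper. The paper proves Lemma \ref{Le3.1} by slicing time at $\la t=\theta$: for $\la t\le\theta$ it compares with the constant-coefficient equations ($\la^2\de_1^2 y\le y''\le \la^2\de_1^{-2}y$) and uses that $\sinh$ is comparable to its argument in that regime, while for $\la t\ge\theta$ it re-applies Lemma \ref{thm-ode} (the Riccati-type analysis with $G=0$) to get $y\simeq \la^{-1}e^{\la\eta(t)}$, $y'\simeq e^{\la\eta(t)}$, and then glues the two regimes. You instead undo the change of variable, pass to $(m\tilde y')'=\la^2 m\tilde y$ in the original time, and use the characteristic substitution $U=m^{1/2}(\tilde y'-\la\tilde y)$, $V=m^{1/2}(\tilde y'+\la\tilde y)$; I checked the system $U'=-\la U-\tfrac b2 V$, $V'=\la V-\tfrac b2 U$, the monotonicity of $V^2-U^2$, the bound $V\ge e^{\la\tau-\frac12\|b\|_{L^1}}$, the identity $4\la m\,\tilde y\tilde y'=V^2-U^2$, and the elementary inequality $e^{2x}-1-2x\ge 2\sinh^2x$, and each step goes through, with constants depending only on $\de_1$ and $\|b\|_{L^1}$, uniformly in $\la$ (indeed for all $\la>0$, not just $\la\le\la_0$). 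What each approach buys: the paper's argument recycles Lemma \ref{thm-ode} (already needed for the eigenfunctions) and yields two-sided asymptotics, at the cost of the small/large $\la t$ case split and a non-explicit uniform constant; your argument is self-contained, avoids Lemma \ref{thm-ode} and the slicing entirely, gives exactly the one-sided bounds \eqref{eq-lower} that the lemma requires in a single global computation, and makes the uniformity of $c_3$ completely explicit. Two tiny points worth writing out if you formalize it: justify $\tilde y\ge 0$ before taking the square root of $\tilde y^2\ge c\,\sinh^2(\la\tau)/\la^2$ (your continuity remark suffices), and note that the final constant for $y'$ picks up two factors of $\de_1$, one from $1/m\ge\de_1$ and one from $y'(t)=\tilde m(t)\,\tilde y'(\eta(t))\ge\de_1\tilde y'(\eta(t))$, which is harmless.
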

\begin{proof}
We may want to apply the Levinson theorem to give the proof, similar to that for \eqref{eq-Lev}. However, such a proof may not be sufficient to ensure the uniform constant. Instead, we give a direct proof.

At first, it is clear that $y, y''>0$ for all $t>0$, $y'>0$ for all $t\ge 0$, and so all of these functions are increasing.
Since $\tilde{m}\in [\delta_{1}, 1/\delta_{1}]$, by \eqref{ode1}, we have 
$$\lambda^{2}\delta^{2}_{1}y\leq y''\leq \frac{\lambda^{2}}{\delta_{1}^{2}}y \ ,$$ which gives us
\beeq
\label{eq-4.1}
 \frac{1}{\lambda\delta_{1}}\sinh \lambda \delta_{1}t\leq y\leq \frac{\delta_{1}}{\lambda}\sinh \frac{\lambda}{\delta_{1}}t\ ,\ 
 \cosh \lambda\delta_{1}t\le y'\le
\cosh \frac{\lambda}{\delta_{1}}t
  \ .
 \eneq
We claim that there exists a $\theta>0$ such that when $\lambda t<\theta$, we have 
\beeq\label{eq-claim}\frac{\sinh \lambda \delta_{1}t}{\delta_{1}\sinh \lambda\eta(t)}\geq \de _1^{2}\ .\eneq
Actually,
there exists $\ep>0$ such that
$$
t\le \sinh t
\le \de_1^{-1} t, 
1\le \cosh t
\le \de_1^{-1},
\forall t\in [0, \ep]\ .$$
Let $\theta=\de_1 \ep$, 
as 
$\eta(t)\le \de_1^{-1} t$,
we have $ \lambda\eta(t)\le
\de_1^{-1} t\la\le
\de_1^{-1}\theta=\ep
$ for $\lambda t<\theta$,
and so
$$\frac{\sinh \lambda \delta_{1}t}{\delta_{1}\sinh \lambda\eta(t)}\geq 
\frac{\lambda \delta_{1}t}{ \lambda\eta(t)}\geq 
\de_1^2\ ,
$$
which verifies the claim \eqref{eq-claim}.
Thus we get
$$y
\ge \frac{1}{\lambda\delta_{1}}\sinh \lambda \delta_{1}t
\geq \de_1^2
\frac{\sinh \lambda\eta(t)}{\lambda}, \forall \lambda t\leq \theta\ .$$
Also we have
$$y'\ge 1\ge
\de_1\cosh \lambda\eta(t), \forall \lambda t\leq \theta\ ,$$
which gives us
\eqref{eq-lower} with $c_3\in (0, \de_1^2]$ for 
$\lambda t\leq \theta$.

When $\lambda t\geq \theta$, consider the following equation
\beeq
\label{eq-3.1}
\begin{cases}
y''-\lambda^{2}\tilde{m}^{2}(t)y=0,\\
y(\theta/\lambda)=y_{0}, y'(\theta/\lambda)=y_{1}\ ,
\end{cases}
\eneq
where 
$y_0\sim \la^{-1}$, and $y_1\sim 1$.
For \eqref{eq-3.1}, we could apply
Lemma \ref{thm-ode} with $G=0$ to conclude
$$y(t)
\simeq  y_0 e^{\lambda\int^{t}_{\theta/\la }\tilde m(\tau)d\tau}\simeq  \frac{ e^{\lambda\int^{t}_{\theta/\la }\tilde m(\tau)d\tau}}{\la},\ t\ge \theta/\lambda\ ,$$
$$y'\simeq y_1+y_0\la (e^{\lambda\int^{t}_{\theta/\la }\tilde m(\tau)d\tau}-1)
\simeq e^{\lambda\int^{t}_{\theta/\la }\tilde m(\tau)d\tau},\ t\ge \theta/\lambda\ .
$$
Recall from
\eqref{eq-change}, we have $\int_0^t\tilde{m}(\tau)d\tau=\eta(t)$,
$\la \int_0^{\theta/\la} \tilde{m}(\tau)d\tau\in (\theta\de_0,
\theta\de_0^{-1}),$  and thus
\beeq
y\simeq \la^{-1}e^{\lambda\eta(t) }, \ 
y'\simeq e^{\lambda\eta(t) },\ t\ge \theta/\lambda\ ,
\eneq
which gives us
\eqref{eq-lower}  for 
$\lambda t\ge \theta$ and completes the proof.
\end{proof}

\subsection{Test functions and proof of blow up}
With help of 
subsection \ref{sec-change},
Lemma \ref{elp} and
Lemma \ref{Le3.1}, we could 
follow the similar proof as
 in \cite{WaYo18-1pub} to give the proof of the critical case for 
Theorems \ref{thm-1-AE} and \ref{thm-2-AE-damp}.

At first, we observe that 
to prove the 
$T_\ep \le \exp(C_0\ep^{-p(p-1)})$ for the original equation  \eqref{2.1'},
we need only to prove
 that the lifespan for 
\eqref{eq-2.1}, denoted by $S_\ep$,
satisfies
\beeq\label{eq-life-afterchange} S_\ep \le \exp(C\ep^{-p(p-1)})\eneq
for some other constant $C>0$.
Actually, by \eqref{eq-life-afterchange}, we know that
$$T_\ep\le \eta(S_\ep)\le \eta(\exp(C\ep^{-p(p-1)}))\le
\de_1^{-1}\exp(C\ep^{-p(p-1)})\le\exp(2C\ep^{-p(p-1)})
$$ for sufficiently small $\ep>0$.

In the following, we want to show
blow up and \eqref{eq-life-afterchange} for
 \eqref{bianhuan1}.
 
As in \cite{WaYo18-1pub}, the proof combines several classical ideas in order to generalize and simplify the methods of Zhou \cite{Zh07} and Zhou-Han 
\cite{ZhouHan14criti}. The test function with
certain behavior at future timelike infinity
is constructed
based on the  exponential ``eigenfunctions" of the Laplace-Beltrami operator for asymptotically Euclidean manifolds.
To improve the lower bound of certain auxiliary function, the method of iteration (slicing method) in 
  Agemi-Kurokawa-Takamura \cite{MR1785116} is used.

\subsubsection{Basic ingredients of test functions}
Based on Lemma \ref{elp} and
Lemma \ref{Le3.1},
we are ready to construct 
 basic ingredients of test functions.

 By Lemma \ref{elp}, for any $\la\in (0,\la_0]$,
there exists $\phi_\la$ solving the elliptic eigenvalue problem posed on asymptotically Euclidean manifolds \eqref{dl1}-\eqref{unelp}
$$\Delta_{\gm}\phi_{\lambda}=\lambda^{2}\phi_{\lambda}, $$ 
satisfying \eqref{1.50}, that is,
\beeq
\label{jian2}
c_1< \phi_{\lambda}< c_1^{-1} \langle \lambda x\rangle^{-\frac{n-1}{2}}e^{\lambda\int^{|x|}_{0}K(\tau)d\tau}\ .
\eneq

Fix $T>0$, and consider $t\in [0, T]$.
Let $y_{T,\la}(t)$ be the solution to 
\beeq \label{ode2}
\pt^2 y-\lambda^{2}\tilde{m}^{2}(t)y=0, y(T)=0, y'(T)=-1
\ .\eneq
With $s=T-t$, $z_T(s)=y_{T,\la}(T-s)$, we see that
\beeq \label{ode3}
\pa_s^2z-\lambda^{2}\tilde{m}^{2}(T-s)y=0, z(0)=0, z'(0)=1
\ .\eneq
Then by Lemma \ref{Le3.1} and \eqref{eq-change} we have 
$\int_0^s\tilde{m}(T-\tau) d\tau=\eta(T)-\eta(T-s)$, and
$$y_{T,\la}(t) \gtrsim \frac{\sinh \lambda (\eta(T)-\eta(t))}{\lambda},\ -y_{T,\la}'(t)\gtrsim \cosh \lambda (\eta(T)-\eta(t))\ .$$

Let
$\psi_{T,\la}(t, x)=y_{T,\la}(t)\phi_{\lambda}(x)$, then it is a solution of the linear homogeneous wave equations for $t\in [0, T]$
\beeq
\label{eq-6.1}
(\pa^{2}_{t}-\tilde{m}^{2}(t)\Delta_{\gm})\psi_{T,\la}(t, x)=0,\
\psi_{T,\la}(T, x)=0\ , \ \pt \psi_{T,\la}(t, x)(T)=-\phi_\la\ .
\eneq
In addition, $\psi_{T,\la}(t, x)$ satisfies 
\begin{align}
\label{eq-6.2}
\begin{cases}
\psi_{T,\la}(0, x)=\phi_{\lambda}(x)y_{T,\la}(0)\gtrsim \phi_{\lambda}(x)\frac{\sinh \lambda\eta(T)}{\lambda}
\ge 0
,\\
-\pt \psi_{T,\la}(0, x)=-\phi_{\lambda}(x)y_{T,\la}'(0)\gtrsim \phi_{\lambda}(x)\cosh \lambda \eta(T)\ge 0\ .
\end{cases}
\end{align}

\subsubsection{Integral inequality}

Let $u$ be the solution to
 \eqref{bianhuan1} in $[0, T]$ and
$L=\pa^{2}_{t}-\tilde{m}^{2}(t)\Delta_{\gm}$ be the corresponding wave operator, we use Green's identity and \eqref{eq-6.1} to get
 \begin{eqnarray*}
\int_0^T \int_{\R^n}
\tilde{m}^2(t)\psi_{T,\la} |u|^p dv_\gm dt
&=&\int_0^T \int_{\R^n}
(\psi_{T,\la} L u-u L\psi_{T,\la}) dv_\gm dt\\
& = & \int_{\R^n}
(\psi_{T,\la} \pt  u-u \pt \psi_{T,\la}) dv_\gm |_{t=0}^T \\
 & = &
  \int_{\R^n}
u(T)\phi_{\la} dv_\gm  + \ep \int_{\R^n}
(u_0 \pt \psi_{T,\la}(0)-u_1 \psi_{T,\la}(0)) dv_\gm\ .
\end{eqnarray*}

Assuming 
the data are nontrivial satisfying \eqref{hs2}, by applying \eqref{eq-6.2} with the fact $\tilde m\ge\de_1>0$, we have 
\beeq\int_{\R^{n}}u(T)\phi_{\lambda}d v_{\gm}\gtrsim\int^{T}_{0}\int_{\R^{n}}|u|^{p}\phi_{\lambda}(x)
\frac{\sinh \lambda (\eta(T)-\eta(t))}{\lambda}
 d v_{\gm}dt\ .\label{eq-6.3}
\eneq
 Notice that the left side is basically $G(T)$ in Section \ref{sec-sub}, with small parameter $\la$. 

\subsubsection{Test functions} 
Observing the exponential increasing feature (
$e^{\lambda \eta(T)}$) on the right hand side of  \eqref{eq-6.3}, we 
 multiply $e^{-\lambda(\eta(T)+R_{1})}\lambda^{q}$,
  with $q>-1$, to \eqref{eq-6.3} and integrating from $0$ to $\lambda_{0}$ to get
\begin{eqnarray*}
\nonumber
F(T)&:=&
\int_{\R^{n}}u(T) \int_0^{\la_0} e^{-\lambda(\eta(T)+R_{1})}\phi_{\lambda} \lambda^{q}d\la
d v_{\gm}\\
&\gtrsim&\int^{T}_{0}\int_{\R^{n}}|u|^{p}
\int_0^{\la_0} 
\frac{\sinh \lambda (\eta(T)-\eta(t))}{\lambda}e^{-\lambda(\eta(T)+R_{1})}\phi_{\lambda} \lambda^{q}
 d\la
 d v_{\gm}dt\ .
\end{eqnarray*}
Observing that, due to the support property
\eqref{supp01}
and \eqref{jian2},
$u(T)\in \dot H^{1/2}_{comp}\subset L^1$.
$\phi_\la e^{-\lambda(\eta(T)+R_{1})}=\mathcal{O}(1)$,
$\la^{q}\in L^1_{loc}$, $F(T)$ is well-defined in $T\in [0, S_\ep)$.

To simplify the exposition, we introduce
\beeq\xi_{q}( x, T, t)=\left\{
\begin{array}{ ll}
\int_0^{\la_0} 
\frac{\sinh \lambda (\eta(T)-\eta(t))}{\lambda(T-t)}e^{-\lambda(\eta(T)+R_{1})}\phi_{\lambda} \lambda^{q}
 d\la
      &    t<T\ ,\\
\int_0^{\la_0}  e^{-\lambda(\eta(T)+R_{1})}\phi_{\lambda} \lambda^{q}
 d\la
      &   t=T\ ,
\end{array}
 \right.
\eneq
and so $F(T)=\int_{\R^{n}}u(x, T)\xi_{q}(x, T, T)d v_{\gm}$,
\beeq\label{eq-F-int}
F(T)\gtrsim\int^{T}_{0}(T-t)\int |u(x, t)|^{p}\xi_{q}(x,T, t)dv_{\gm}dt\ .\eneq

\subsubsection{Estimates of $\xi_{q}$}
\begin{lem}
\label{est-xi}
Let $n\geq 2$.
There exist positive constants $A_{j}$, $j=1, 2$, which are independent of $\lambda$, so that
 we have the following estimates:
\begin{enumerate}
  \item if $q> 0$, 
  $0\leq t<T$, then
\beeq
\label{5.4-2}
\xi_{q}(x, T, t)\geq A_{1}\langle T\rangle^{-1}\langle t\rangle^{-q}\ ,
\eneq
  \item if $q>(n-3)/2$, $\int^{|x|}_{0}K(\tau)d\tau\leq \eta(T)+R_{1}$, then
\beeq
\label{5.4-3}
\xi_{q}(x, T, T)\leq A_{2}\langle T\rangle^{-(n-1)/2}\big\langle \eta(T)-\int^{|x|}_{0}K(\tau)d\tau\big\rangle^{(n-3)/2-q}\ .
\eneq
\end{enumerate}
\end{lem}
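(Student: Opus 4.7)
The plan is to prove the two inequalities by reducing each $\xi_q$ to an explicit Laplace-type integral in $\lambda$ and then estimating that integral with a case split. Both parts rely on inserting the bounds on $\phi_\lambda$ from \eqref{jian2}.

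For the lower bound (1), the key algebraic identity is
$\sinh(\lambda(\eta(T)-\eta(t)))\,e^{-\lambda(\eta(T)+R_1)}=\tfrac12 e^{-\lambda(\eta(t)+R_1)}\bigl(1-e^{-2\lambda(\eta(T)-\eta(t))}\bigr)$,
which transfers the exponential scale from $\eta(T)$ onto $\eta(t)$. Using $\phi_\lambda\ge c_1$ one rewrites
$\xi_q(x,T,t)\gtrsim \frac{1}{T-t}\int_0^{\lambda_0}e^{-\lambda(\eta(t)+R_1)}\bigl(1-e^{-2\lambda(\eta(T)-\eta(t))}\bigr)\lambda^{q-1}d\lambda$.
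I would then split into two regimes. When $T-t\gtrsim T$ (say $t\le T/2$), on the piece $\lambda(\eta(T)-\eta(t))\gtrsim 1$ the parenthesis is $\gtrsim 1$, and after the substitution $s=\lambda(\eta(t)+R_1)$ the hypothesis $q>0$ makes $\int s^{q-1}e^{-s}\,ds$ convergent at the origin and bounded below by a constant on a fixed interval, giving $\xi_q\gtrsim (\eta(t)+R_1)^{-q}/(T-t)\sim\langle T\rangle^{-1}\langle t\rangle^{-q}$. In the complementary regime $t>T/2$ one has $\langle T\rangle\sim\langle t\rangle$, and the crude bound $\sinh x/x\ge 1$ together with $\phi_\lambda\ge c_1$ and a direct Laplace estimate gives $\xi_q\gtrsim\langle T\rangle^{-q-1}\sim\langle T\rangle^{-1}\langle t\rangle^{-q}$.

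For the upper bound (2), set $\rho=\eta(T)-\int_0^{|x|}K(\tau)d\tau$, so the support condition reads $\rho+R_1\ge 0$ and one has $|x|+(\rho+R_1)\sim\eta(T)+R_1\sim\langle T\rangle$. Plugging the upper bound of $\phi_\lambda$ yields
$\xi_q(x,T,T)\le c_1^{-1}\int_0^{\lambda_0}e^{-\lambda(\rho+R_1)}\langle\lambda x\rangle^{-(n-1)/2}\lambda^q\,d\lambda$.
I would split the $\lambda$-integral at $\lambda=1/|x|$ (when $|x|\ge 1/\lambda_0$; otherwise only the low-frequency piece is present), using $\langle\lambda x\rangle^{-(n-1)/2}\le 1$ for $\lambda|x|\le 1$ and $\langle\lambda x\rangle^{-(n-1)/2}\le(\lambda|x|)^{-(n-1)/2}$ for $\lambda|x|\ge 1$. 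Substituting $s=\lambda(\rho+R_1)$ in both pieces and using the hypothesis $q>(n-3)/2$ (which is exactly what makes $\int_0^\infty e^{-s}s^{q-(n-1)/2}ds$ finite at the origin) produces the pointwise bound
$\xi_q(x,T,T)\lesssim \max(\rho+R_1,|x|)^{-q-1}+|x|^{-(n-1)/2}(\rho+R_1)^{(n-3)/2-q}$.

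The final step is to verify that each of these two terms is $\le A_2\langle T\rangle^{-(n-1)/2}\langle\rho\rangle^{(n-3)/2-q}$. The first term is controlled by $\max(\rho+R_1,|x|)\gtrsim\langle T\rangle$, combined with the monotonicity estimate $\langle\rho\rangle^{(n-3)/2-q}\gtrsim\langle T\rangle^{(n-3)/2-q}$ (which holds because $(n-3)/2-q<0$ and $\langle\rho\rangle\le C\langle T\rangle$). The main obstacle is the second term in the regime where $|x|$ is much smaller than $\langle T\rangle$: there $\rho\sim\eta(T)\sim\langle T\rangle$ but the naive factor $|x|^{-(n-1)/2}$ is too large, so one must exploit the true range of integration and the tail bound $\int_{M}^\infty e^{-s}s^{q-(n-1)/2}ds\lesssim M^{q-(n-1)/2}e^{-M}$ with $M=(\rho+R_1)/|x|$; maximizing the resulting factor $|x|^{-q}e^{-c(\rho+R_1)/|x|}$ over $|x|$ (critical point near $|x|\sim(\rho+R_1)/q$) contributes $(\rho+R_1)^{-q}$ and, together with the prefactor $(\rho+R_1)^{-1}$, gives $\langle T\rangle^{-q-1}\lesssim\langle T\rangle^{-(n-1)/2}\langle\rho\rangle^{(n-3)/2-q}$. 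The remaining small-scale case $\rho+R_1<1$ (where $\langle\rho\rangle\sim 1$ and $|x|\sim\langle T\rangle$) is handled directly by bounding $e^{-\lambda(\rho+R_1)}\le 1$ and computing the purely algebraic integral $\int_0^{\lambda_0}\langle\lambda x\rangle^{-(n-1)/2}\lambda^q\,d\lambda$, whose value is $\lesssim\langle T\rangle^{-(n-1)/2}$ precisely because $q>(n-3)/2$.
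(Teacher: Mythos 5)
Your proposal is correct, and it reaches both estimates by the same overall strategy as the paper (insert the two-sided bound \eqref{jian2} on $\phi_\lambda$ and reduce $\xi_q$ to explicit Laplace-type integrals in $\lambda$), but the case decompositions are genuinely different. For the lower bound, the paper does not split in $t$ at all: after the same rewriting $\sinh(\lambda(\eta(T)-\eta(t)))e^{-\lambda(\eta(T)+R_1)}=\tfrac12 e^{-\lambda(\eta(t)+R_1)}(1-e^{-2\lambda(\eta(T)-\eta(t))})$, it simply restricts the integration to the single window $\lambda\in[\lambda_0/(2\langle t\rangle),\lambda_0/\langle t\rangle]$, where $e^{-\lambda(\eta(t)+R_1)}\phi_\lambda\geq c$, and uses the elementary inequality $1-e^{-\lambda_0\delta_1(T-t)/\langle t\rangle}\gtrsim (T-t)/\langle T\rangle$; this gives $A_1\langle T\rangle^{-1}\langle t\rangle^{-q}$ in one stroke, whereas your two-regime split ($t\le T/2$ via the piece $\lambda(\eta(T)-\eta(t))\gtrsim 1$, and $t>T/2$ via $\sinh x\ge x$) needs a little extra care when $T$ is bounded or $t$ is comparable to $T/2$, so that the $s$-range $[(\eta(t)+R_1)/(\eta(T)-\eta(t)),\lambda_0(\eta(t)+R_1)]$ actually contains a fixed interval; the degenerate cases are covered by exactly the direct Laplace estimate you already use in the second regime, so this is a presentational rather than substantive gap. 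For the upper bound, the paper splits the spatial region instead of the $\lambda$-integral: in the interior region $\int_0^{|x|}K\le(\eta(T)+R_1)/2$ it discards $\langle\lambda x\rangle^{-(n-1)/2}$ and keeps half of the exponential weight, getting $\int_0^{\lambda_0}e^{-\lambda(\eta(T)+R_1)/2}\lambda^q d\lambda\lesssim\langle T\rangle^{-q-1}$ with no tail analysis, while near the cone it uses $(\lambda|x|)^{-(n-1)/2}$ and the condition $q>(n-3)/2$ exactly as you do. Your route, splitting at $\lambda=1/|x|$ and then controlling the regime $|x|\ll\langle T\rangle$ through the incomplete-Gamma tail and the optimization of $|x|^{-q}e^{-\sigma/|x|}$, is correct but does more work than necessary precisely where the paper's ``sacrifice half the exponential'' trick disposes of the difficulty immediately; on the other hand, your version yields a sharper pointwise bound $\max(\rho+R_1,|x|)^{-q-1}+|x|^{-(n-1)/2}(\rho+R_1)^{(n-3)/2-q}$, which is slightly more information than the statement requires.
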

\begin{proof}
To show \eqref{5.4-2},  we have 
$$e^{-\lambda(\eta(t)+R_{1})}
 \phi_{\lambda}(x)
\ge c>0\ , \forall \la \in [\frac{\lambda_{0}}{2\langle t\rangle},
 \frac{\lambda_{0}}{\langle t\rangle}]\ ,
 $$
 by the lower bound of
\eqref{1.50}.
Then
\begin{eqnarray*}
\xi_{q}( x, T, t) 
  & = & \int_0^{\la_0} 
\frac{
1-e^{-2\lambda(\eta(T)-\eta(t))}
}{2(T-t)} e^{-\lambda(\eta(t)+R_{1})}
 \phi_{\lambda} \lambda^{q-1}
 d\la\\
 &\ge &
c \int^{\frac{\lambda_{0}}{\langle t\rangle}}_{\frac{\lambda_{0}}{2\langle t\rangle}}
\frac{
1-e^{-2\lambda(\eta(T)-\eta(t))}
}{2(T-t)} \lambda^{q-1}
 d\la\\
  &\ge &
c \int^{\frac{\lambda_{0}}{\langle t\rangle}}_{\frac{\lambda_{0}}{2\langle t\rangle}}
\frac{
1-e^{-\lambda_0(\eta(T)-\eta(t))/\<t\>}
}{2(T-t)} \lambda^{q-1}
 d\la\\
  &\gtrsim &
\frac{
1-e^{-\lambda_0\de_1 ( T - t )/\<t\>}
}{2(T-t)} \<t\>^{-q}\gtrsim
\<T\>^{-1}\<t\>^{-q}\ ,
\end{eqnarray*}
where we have used 
the elementary inequality
$
1-e^{-\lambda_0\de_1 ( T - t )/\<t\>
}  \gtrsim
(T-t)/\<T\>$ for any $t\in [0, T]$
and
the fact  that $\eta(T)-\eta(t)\ge  \delta_{1} (T-t)$.

For \eqref{5.4-3}, we 
divide the region $\int^{|x|}_{0}K(\tau)d\tau\leq \eta(T)+R_{1}$
into two parts. \\
Case A: $\int^{|x|}_{0}K(\tau)d\tau\leq \frac{\eta(T)+R_{1}}{2}$, then
$\<\eta(T)-\int^{|x|}_{0}K(\tau)d\tau\>\simeq \<T\>$.
By
 the upper bound of
 Lemma \ref{elp}, we have 
\beeq
\nonumber
\xi_{q}(x, T, T)\les\int^{\lambda_{0}}_{0}e^{-\frac{\lambda}{2}(\eta(T)+R_{1})}\lambda^{q}d\lambda
\les\int^{\lambda_{0}}_{0}e^{-\frac{\lambda}{2}(\delta_{1}T+R_{1})}\lambda^{q}d\lambda
\les \langle T\rangle^{-q-1}\ .
\eneq
Case B: $\int^{|x|}_{0}K(\tau)d\tau\geq \frac{\eta(T)+R_{1}}{2}$, then
$|x|\sim \<T\>$.
By Lemma \ref{elp}, we have 
\begin{align*}
\xi_{q}(x, T, T)\les& \int^{\lambda_{0}}_{0}e^{-\lambda\big(\eta(T)+R_{1}-\int^{|x|}_{0}K(\tau)d\tau\big)}
(\la |x|)^{-\frac{n-1}{2}}\lambda^{q}
d\lambda\\
\les &\langle x\rangle^{-\frac{n-1}{2}}\left\<\eta(T)-\int^{|x|}_{0}K(\tau)d\tau\right\>^{-q+\frac{n-3}{2}}\ .
\end{align*}
In summary, we get the desired upper bound \eqref{5.4-3} of $\xi_{q}(x, T, T)$.
\end{proof}

\subsubsection{Proof of \eqref{eq-life-afterchange}}

Let $p=p_c(n)>1+\frac{2}{n-1}$ and
$q=\frac{n-1}2-\frac 1p>\max(0, \frac{n-3}2)$, so that
$$(n-1)(1-\frac p2)-q=
\frac{n-1}2+\frac 1p-p\frac{n-1}2
=\frac 1p-\frac{n-1}2(p-1)
=\frac 1p-\frac{p+1}{p}
=
-1\ .$$
Recall from \eqref{jian1}, 
\eqref{eq-lb2} and
\eqref{app10}, we have
$$\int_{\R^{n}}|u(t)|^{p}d v_{\gm}\gtrsim
\ep^p\<t\>^{(n-1)(1-\frac p2)} ,\ \forall t
\in [1, S_{\ep}) .
$$
 By Lemma \ref{est-xi} 1) and
\eqref{eq-F-int}, we get
$$
F(T)\gtrsim \int^{T}_{1}(T-t) \ep^p \<T\>^{-1}\<t\>^{(n-1)(1-\frac p2)-q} dt 
=\int^{T}_{1}(T-t) \ep^p \<T\>^{-1}\<t\>^{-1} dt 
\ , $$
 for $T\ge 1$.
Then, for any $\theta\in (0,1)$,
we have
$$\int^{T}_{1}(T-t) \<t\>^{-1} dt 
\ges 
\int^{T}_{1} \ln t dt 
\ge 
\int^{T}_{\theta T} \ln t dt 
\ge 
(1-\theta)T \ln (\theta T)\ ,
$$
and
$F(T)\gtrsim_\theta \ep^p \ln (\theta T)$.
 In particular,
when $T\ge 1+1/2=3/2$, we have
\beeq\label{eq-slicing-1} F(T)\gtrsim \ep^p \ln (2T/3)\ .\eneq

To connect the right of \eqref{eq-F-int} with $F$, we use H\"older's inequality and  Lemma \ref{est-xi} 1), 2)  to obtain
\beeq\label{eq-slicing-2} F(T)\gtrsim \frac{1}{\langle T \rangle}\int^{T}_{0}\frac{T-t}{\langle t\rangle}\frac{F(t)^{p}}{( \log\langle t\rangle)^{p-1}}dt\ , T\in (0, S_{\ep})\ .\eneq

Based on
\eqref{eq-slicing-1}
and
\eqref{eq-slicing-2},
we could run the
``slicing method" of iteration in
  Agemi-Kurokawa-Takamura \cite{MR1785116}
(see \cite[Proposition 5.3]{WaYo18-1pub})
to improve the lower bounds of $F$, which ultimately
proves that, for some $B>0$,
\beeq\label{eq-slicing-3}F(T)\ge B \frac{\ln (3+T)}{[\ln (T/2)]^{1/(p-1)}}\exp \left[\frac{p^{j}}{p-1} \ln (B \ep^{p(p-1)}\ln T)\right] \ ,\eneq
for any $j\ge 1$ and $T\ge 4$.

Based on the above inequality \eqref{eq-slicing-3},
it is easy to prove
 the desired upper bound of lifespan
 \eqref{eq-life-afterchange} with $C\ge 2 B^{-1}$. Assume by contradiction that $$S_\ep>\exp(2 B^{-1}\ep^{-p(p-1)})\ ,$$
then we could choose $T=\exp(2 B^{-1}\ep^{-p(p-1)}) \ge 4$ for 
$0<\ep<\ep_1\ll 1$. By \eqref{eq-slicing-3}, for this choice of $T$, we have
\beeq\label{eq-slicing-4}F(T)\ge B \frac{\ln (3+T)}{[\ln (T/2)]^{1/(p-1)}}\exp \left[\frac{p^{j}}{p-1} \ln 2\right], \forall j\ge 1\ ,\eneq
and so, by letting $j\to \infty$,
$F(T)=\infty$.
This gives the desired contradiction, which completes the proof
 of 
 \eqref{eq-life-afterchange}.


\begin{thebibliography}{10}

\bibitem{MR1785116}
Rentaro Agemi, Yuki Kurokawa, and Hiroyuki Takamura.
\newblock Critical curve for {$p$}-{$q$} systems of nonlinear wave equations in
  three space dimensions.
\newblock {\em J. Differential Equations}, 167(1):87--133, 2000.

\bibitem{CaGe06}
Davide Catania and Vladimir Georgiev.
\newblock Blow-up for the semilinear wave equation in the {S}chwarzschild
  metric.
\newblock {\em Differential Integral Equations}, 19(7):799--830, 2006.

\bibitem{ChristK01}
Michael Christ and Alexander Kiselev.
\newblock Maximal functions associated to filtrations.
\newblock {\em J. Funct. Anal.}, 179(2):409--425, 2001.

\bibitem{CoLe55} 
Earl A. Coddington and
 Norman Levinson.
 \newblock {\em  Theory of ordinary differential equations}. McGraw-Hill Book Company, Inc., New York-Toronto-London, 1955. 



\bibitem{Evans10}
Lawrence~C. Evans.
\newblock {\em Partial differential equations}, volume~19 of {\em Graduate
  Studies in Mathematics}.
\newblock American Mathematical Society, Providence, RI, second edition, 2010.

\bibitem{GLS97}
Vladimir Georgiev, Hans Lindblad, and Christopher~D. Sogge.
\newblock Weighted {S}trichartz estimates and global existence for semilinear
  wave equations.
\newblock {\em Amer. J. Math.}, 119(6):1291--1319, 1997.

\bibitem{MR1814364}
David Gilbarg and Neil~S. Trudinger.
\newblock {\em Elliptic partial differential equations of second order}.
\newblock Classics in Mathematics. Springer-Verlag, Berlin, 2001.
\newblock Reprint of the 1998 edition.



\bibitem{IkSo18}
Masahiro Ikeda and Motohiro Sobajima.
\newblock Life-span of solutions to semilinear wave equation with
  time-dependent critical damping for specially localized initial data.
\newblock {\em Math. Ann.}, 372(3-4):1017--1040, 2018.

\bibitem{MaTuWa}
Masahiro Ikeda, Ziheng Tu, and Kyouhei Wakasa.
\newblock Small data blow-up of semilinear wave equation with scattering
  dissipation and time-dependent mass.
\newblock Preprint. ArXiv 1904.09574, 2019.

\bibitem{IKTW}
Takuto Imai, Masakazu Kato, Hiroyuki Takamura and Kyouhei Wakasa.
\newblock
The sharp lower bound of the lifespan of solutions to semilinear wave equations with low powers in two space dimensions.
\newblock  {\em Asymptotic Analysis for Nonlinear Dispersive and Wave Equations}, 31--53, Mathematical Society of Japan, Tokyo, Japan, 2019.




\bibitem{John79}
Fritz John.
\newblock Blow-up of solutions of nonlinear wave equations in three space
  dimensions.
\newblock {\em Manuscripta Math.}, 28(1-3):235--268, 1979.

\bibitem{LaiTa18}
Ning-An Lai and Hiroyuki Takamura.
\newblock Blow-up for semilinear damped wave equations with subcritical
  exponent in the scattering case.
\newblock {\em Nonlinear Anal.}, 168:222--237, 2018.

\bibitem{LaiZhou14}
Ning-An Lai and Yi~Zhou.
\newblock An elementary proof of {S}trauss conjecture.
\newblock {\em J. Funct. Anal.}, 267(5):1364--1381, 2014.

\bibitem{LaiZhou18}
Ning-An Lai and Yi~Zhou.
\newblock Blow up for initial boundary value problem of critical semilinear
  wave equation in two space dimensions.
\newblock {\em Commun. Pure Appl. Anal.}, 17(4):1499--1510, 2018.

\bibitem{LinLaiMing19}
Y.~Lin, Ning-An Lai, and S.~Ming.
\newblock Lifespan estimate for semilinear wave equation in schwarzschild
  spacetime.
\newblock Applied Mathematics Letters, 
 to appear. 

\bibitem{L1990}
Hans Lindblad.
\newblock Blow-up for solutions of $\Box u=|u|^{p}$ with small initial data.
\newblock{ \em Comm. Partial Differential Equations}, 15(6):757--821, 1990.


\bibitem{LMSTW}
Hans Lindblad, Jason Metcalfe, Christopher~D. Sogge, Mihai Tohaneanu, and
  Chengbo Wang.
\newblock The {S}trauss conjecture on {K}err black hole backgrounds.
\newblock {\em Math. Ann.}, 359(3-4):637--661, 2014.

\bibitem{LdSo96}
Hans Lindblad and Christopher~D. Sogge.
\newblock Long-time existence for small amplitude semilinear wave equations.
\newblock {\em Amer. J. Math.}, 118(5):1047--1135, 1996.

\bibitem{LW2018}
Mengyun Liu and Chengbo Wang.
\newblock Global existence for semilinear damped wave equations in relation
  with the strauss conjecture.
\newblock 
 {\em
 Discrete Contin. Dyn. Syst.} 40 (2020), no. 2, 709--724.

\bibitem{MW17}
Jason Metcalfe and Chengbo Wang.
\newblock The {S}trauss conjecture on asymptotically flat space-times.
\newblock {\em SIAM J. Math. Anal.}, 49(6):4579--4594, 2017.

\bibitem{Scha85}
Jack Schaeffer.
\newblock The equation {$u_{tt}-\Delta u=|u|^p$} for the critical value of
  {$p$}.
\newblock {\em Proc. Roy. Soc. Edinburgh Sect. A}, 101(1-2):31--44, 1985.

\bibitem{Sideris84}
Thomas~C. Sideris.
\newblock Nonexistence of global solutions to semilinear wave equations in high
  dimensions.
\newblock {\em J. Differential Equations}, 52(3):378--406, 1984.

\bibitem{SSW12}
Hart~F. Smith, Christopher~D. Sogge, and Chengbo Wang.
\newblock Strichartz estimates for {D}irichlet-wave equations in two dimensions
  with applications.
\newblock {\em Trans. Amer. Math. Soc.}, 364(6):3329--3347, 2012.

\bibitem{MoWa18}
Motohiro Sobajima and Kyouhei Wakasa.
\newblock Finite time blowup of solutions to semilinear wave equation in an
  exterior domain.
\newblock Preprint. ArXiv 1812.09182, 2018.

\bibitem{SW10}
Christopher~D. Sogge and Chengbo Wang.
\newblock Concerning the wave equation on asymptotically {E}uclidean manifolds.
\newblock {\em J. Anal. Math.}, 112:1--32, 2010.

\bibitem{Strauss81}
Walter~A. Strauss.
\newblock Nonlinear scattering theory at low energy.
\newblock {\em J. Funct. Anal.}, 41(1):110--133, 1981.

\bibitem{Taka15}
Hiroyuki Takamura.
\newblock Improved {K}ato's lemma on ordinary differential inequality and its
  application to semilinear wave equations.
\newblock {\em Nonlinear Anal.}, 125:227--240, 2015.

\bibitem{TakaWakasa11}
Hiroyuki Takamura and Kyouhei Wakasa.
\newblock The sharp upper bound of the lifespan of solutions to critical
  semilinear wave equations in high dimensions.
\newblock {\em J. Differential Equations}, 251(4-5):1157--1171, 2011.

\bibitem{Ta02}
Daniel Tataru.
\newblock Strichartz estimates for second order hyperbolic operators with
  nonsmooth coefficients. {III}.
\newblock {\em J. Amer. Math. Soc.}, 15(2):419--442, 2002.

\bibitem{TuLin17p1}
Ziheng Tu and Jiayun Lin.
\newblock A note on the blowup of scale invariant damping wave equation with
  sub-strauss exponent.
\newblock Preprint ArXiv:1709.00866, 2017.

\bibitem{WaYo18-1pub}
Kyouhei Wakasa and Borislav Yordanov.
\newblock Blow-up of solutions to critical semilinear wave equations with
  variable coefficients.
\newblock {\em J. Differential Equations}, 266(9):5360--5376, 2019.

\bibitem{WaYo18-2-pub}
Kyouhei Wakasa and Borislav Yordanov.
\newblock On the nonexistence of global solutions for critical semilinear wave
  equations with damping in the scattering case.
\newblock {\em Nonlinear Anal.}, 180:67--74, 2019.

\bibitem{W17}
Chengbo Wang.
\newblock Long-time existence for semilinear wave equations on asymptotically
  flat space-times.
\newblock {\em Comm. Partial Differential Equations}, 42(7):1150--1174, 2017.

\bibitem{Wang18}
Chengbo Wang.
\newblock Recent progress on the strauss conjecture and related problems.
\newblock {\em SCIENTIA SINICA Mathematica}, 48(1):111--130, 2018.

\bibitem{WaYu11}
Chengbo Wang and Xin Yu.
\newblock Concerning the {S}trauss conjecture on asymptotically {E}uclidean
  manifolds.
\newblock {\em J. Math. Anal. Appl.}, 379(2):549--566, 2011.

\bibitem{YorZh06}
Borislav~T. Yordanov and Qi~S. Zhang.
\newblock Finite time blow up for critical wave equations in high dimensions.
\newblock {\em J. Funct. Anal.}, 231(2):361--374, 2006.

\bibitem{MR1177534}
Yi~Zhou.
\newblock Blow up of classical solutions to {$\square u=|u|^{1+\alpha}$} in
  three space dimensions.
\newblock {\em J. Partial Differential Equations}, 5(3):21--32, 1992.

\bibitem{MR1233659}
Yi~Zhou.
\newblock Life span of classical solutions to {$\square u=|u|^p$} in two space
  dimensions.
\newblock {\em Chinese Ann. Math. Ser. B}, 14(2):225--236, 1993.
\newblock A Chinese summary appears in Chinese Ann. Math. Ser. A {{\bf{1}}4}
  (1993), no. 3, 391--392.

\bibitem{Zh07}
Yi~Zhou.
\newblock Blow up of solutions to semilinear wave equations with critical
  exponent in high dimensions.
\newblock {\em Chin. Ann. Math. Ser. B}, 28(2):205--212, 2007.

\bibitem{ZhouHan11}
Yi~Zhou and Wei Han.
\newblock Blow-up of solutions to semilinear wave equations with variable
  coefficients and boundary.
\newblock {\em J. Math. Anal. Appl.}, 374(2):585--601, 2011.

\bibitem{ZhouHan14criti}
Yi~Zhou and Wei Han.
\newblock Life-span of solutions to critical semilinear wave equations.
\newblock {\em Comm. Partial Differential Equations}, 39(3):439--451, 2014.

\end{thebibliography}

\end{document}